\theoremstyle{plain}
\newtheorem{theorem}{Theorem}[section]
\newtheorem{lemma}[theorem]{Lemma}
\newtheorem{definition}[theorem]{Definition}
\newtheorem{proposition}[theorem]{Proposition}
\newtheorem{cor}[theorem]{Corollary}
\newtheorem{remark}[theorem]{Remark}
\numberwithin{equation}{section}
\newcommand{\ra}{\longrightarrow}
\newcommand{\C}{\mathbb{C}}
\newcommand{\R}{\mathbb{R}}
\newcommand{\sheaf}[1]{\ensuremath{\mathcal{#1}}}
\newcommand{\rst}[1]{\ensuremath{\sigma^{#1}}}
\newcommand{\catRP}{\mathbf{RPar}^\mathrm{ss}_{\tau_p}(X, \sigma_X)}
\newcommand{\catQP}{\mathbf{QPar}^\mathrm{ss}_{\tau_p}(X, \sigma_X)}
\newcommand{\catP}{\mathbf{Par}^\mathrm{ss}_{\tau_p}(X)}
\newcommand{\parE}{\overline{\partial}_E}
\begin{document}
\baselineskip=15.5pt

\title{A gauge theoretic aspect of parabolic bundles over real curves}
\author{Sanjay~Amrutiya}
\address{Department of Mathematics, IIT Gandhinagar,
 Near Village Palaj, Gandhinagar - 382355, India}
 \email{samrutiya@iitgn.ac.in}
\author{Ayush~Jaiswal}
\address{Department of Mathematics, IIT Gandhinagar,
 Near Village Palaj, Gandhinagar - 382355, India}
\email{ayush.jaiswal@iitgn.ac.in}
\subjclass[2000]{Primary: 14H60; Secondary: 53C07, 30F50}
\keywords{Moduli spaces; Real and quaternionic parabolic bundles}
\thanks{SA was supported by the SERB-DST under project no. YSS/2015/001182 and MTR/2018/000475.}
\date{}

\begin{abstract}
\noindent In this article, we study the gauge theoretic aspects of real and quaternionic parabolic
bundles over a real curve $(X, \sigma_X)$, where $X$ is a compact Riemann surface and $\sigma_X$ is
an anti-holomorphic involution. For a fixed real or quaternionic structure on a smooth parabolic 
bundle, we examine the orbits space of real or quaternionic connection under the appropriate gauge group.
The corresponding gauge-theoretic quotients sit inside the real points of 
the moduli of holomorphic parabolic bundles having a fixed parabolic type on a compact Riemann surface $X$ .
\end{abstract}
\maketitle

\section{Introduction}\label{intro}
In \cite{NS65}, Narasimhan and Seshadri proved that the vector bundles
associated with irreducible unitary representations of the fundamental group of a compact Riemann
surface are precisely the stable vector bundles on a compact Riemann surface. In \cite{Do83}, 
Donaldson proved the Narasimhan-Seshadri theorem using the results of \cite{U82}. When a compact 
Riemann surface $X$ is equipped with an anti-holomorphic involution $\sigma_X$, an analogue of Narasimhan-Seshadri
theorem for real and quaternionic bundles is studied in \cite{Sch17}, see also \cite{BHH10}. 

The notion of parabolic bundles on compact Riemann surfaces was first introduced by
C. S. Seshadri and their moduli was constructed in \cite{MS80}, using GIT, by 
Mehta and Seshadri. In \cite{MS80}, they have proved that stable parabolic bundles 
of degree zero on a compact Riemann surface are precisely those vector bundles associated with irreducible unitary representations of the fundamental group of 
a punctured Riemann surface. In \cite{Bi91}, Biquard improved (allowing real parabolic weights)
the result of Mehta and Seshadri following \cite{Do83} by considering appropriate Sobolev spaces 
using results of \cite{LM85}. See also \cite{Ko93, Po93, DW97} for a gauge-theoretic approach to 
parabolic bundles. The parabolic bundles over a real curve $(X, \sigma_X)$ is studied in
\cite{Am14a, Am14b, BS20}. In \cite{BS20}, Biswas and Schaffhauser established a bijective 
correspondence between the isomorphism classes of polystable real and quaternionic parabolic vector 
bundles and the equivalence classes of real and quaternionic unitary representations of the orbifold
fundamental group of $(X, \sigma_X)$.

This paper studies the gauge-theoretic aspects of parabolic bundles over a real curve. 
Section \ref{sec-2} reviews some basic concepts and results concerning parabolic bundles on 
compact Riemann surfaces. In Section \ref{sec-3}, we examine the stability of real (resp. quaternionic)
parabolic bundles and $S$-equivalence classes of such bundles. In Section \ref{sec-4}, we study the 
induced real structure on the space of connection and parabolic gauge group. We show that the corresponding
quotients parametrize the real $S$-equivalence classes of semistable real (resp. quaternionic) parabolic
bundles.

\section{Preliminaries}\label{sec-2}
This section recalls some basic notions and results about parabolic bundles. More 
details can be found in \cite{MS80, Bi91}.

\subsection{Parabolic bundles}
Let $X$ be a compact Riemann surface and $S$ a finite subset of $X$.
Let $E$ be a smooth complex vector bundle of rank $r$ on $X$.
A quasi-parabolic structure on $E$ at a point $x\in S$
is a strictly decreasing flag
$$
E_x \; = \; F^1E_x \; \supseteq \; F^2E_x \; \dots \; F^{k_x}E_x \;
\supseteq \; F^{k_x+1}E_x \; = \; 0
$$
of linear subspaces in $E_x$. We define
$$
r_j^x = \dim F^jE_x - \dim F^{j+1}E_x.
$$
The integer $k_x$ is called the flag's length, and the
sequence $(r_1^x,\dots r_{k_x}^x)$ is called the flag type.
The points in $S$ are called parabolic points.

A parabolic srtucture in $E$ at $x$ is a quasi-parabolic
structure at $x$ as above, together with a sequence of
real numbers $0\leq \alpha^x_1 < \dots < \alpha^x_{k_x} < 1$.
We call $r_1^x, \dots, r_{k_x}^x$ the multiplicities of
$\alpha^x_1,\dots, \alpha^x_{k_x}$. The $\alpha_j$ are called the
weights, and we set
$$
d_x(E) = \displaystyle \sum_{j+1}^{k_x} r_j\alpha_j ~~\mbox{and}~~
\mathrm{wt}(E) = \displaystyle \sum_{x\in S} d_xE.
$$

We say that $E$ is a holomorphic parabolic bundle with a parabolic structure on $S$ if  
given a parabolic structure on the underlying smooth complex vector bundle $E$ at each point $x\in S$. 
We denote it by $E_\bullet = (E, {F^iE(x)}, \alpha^x_i)_{x\in P}$. 

By a \emph{parabolic type} $\tau_p$, we mean a fixed
flag type $(r_1,\dots r_{k_x})$, fixed weights $0\leq \alpha^x_1 < \dots < \alpha^x_{k_x} < 1$
and degree $d$.

The parabolic degree is defined by
\begin{equation}\label{p-degree:eq:1}
\mathrm{par}\mbox{deg}(E) = \mbox{deg}(E) + \mathrm{wt}(E).
\end{equation}
We set
\begin{equation}\label{p-slope:eq:1}
\mathrm{par}\mu(E) = \frac{\mathrm{par}\mbox{deg}(E)}{\mbox{rank}(E)}.
\end{equation}

A holomorphic parabolic bundle $E_\bullet$ is called \emph{semi-stable} (resp. \emph{stable}) if for 
all sub-bundles $F$ of $E$, we have $\mathrm{par}\mu(F) \leq \mathrm{par}\mu(E)$ 
(resp. $\mathrm{par}\mu(F) \leq \mathrm{par}\mu(E)$), 
where $F$ has induced parabolic structure from $E_\bullet$.

Let $M_X^\mathrm{ss}(\tau_p)$ be the set of $S$-equivalence classes of semi-stable parabolic bundles 
on $X$ having parabolic type $\tau_p$.

\begin{theorem}\cite{MS80}
There exists a natural structure of a normal projective variety on $M_X^\mathrm{ss}(\tau_p)$ of
dimension $r^2(g-1) + 1 + \sum_{x\in S} \frac{1}{2}(r^2-\sum_{i = 1}^{k_x} (r_i^x)^2)$.
\end{theorem}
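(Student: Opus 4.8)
The plan is to construct the moduli space $M_X^{\mathrm{ss}}(\tau_p)$ via Geometric Invariant Theory, following the strategy of Mehta and Seshadri, and then compute its dimension by a deformation-theoretic count. The construction and the dimension formula are really two separate tasks: the former is a GIT existence argument, the latter a computation of the expected dimension at a smooth (stable) point of the moduli space.

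First I would set up a parameter space carrying a family of quasi-parabolic bundles of the fixed type. Fixing a large twist, one realizes the underlying bundle $E$ as a quotient of a fixed trivial-type bundle, so that semistable bundles of degree $d$ and rank $r$ are parametrized by a locally closed subscheme $R$ of a suitable Quot scheme; on top of this one adds the data of flags in the fibers at the parabolic points $x \in S$, which amounts to forming the associated bundle with fibre a product of partial flag varieties $\prod_{x\in S}\mathrm{Flag}(E_x)$ determined by the flag type $(r_1^x,\dots,r_{k_x}^x)$. The group $\mathrm{SL}$ (or $\mathrm{GL}$) acts on this parameter space, and the central point is to choose a linearization of the action on an ample line bundle so that GIT (semi)stability for the group action matches parabolic (semi)stability with the prescribed weights $\alpha_i^x$. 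I would then invoke the standard machinery: form the GIT quotient, verify it is a normal projective variety (normality coming from the normality of the Quot scheme and the fact that quotients of normal varieties by reductive groups are normal), and identify its points with $S$-equivalence classes.

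\medskip

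For the dimension I would localize at a stable point $E_\bullet$, where the moduli space is smooth, and compute the dimension of the tangent space, which is the first hypercohomology of the parabolic endomorphism complex. Concretely the tangent space is $H^1$ of the sheaf $\mathrm{ParEnd}(E_\bullet)$ of parabolic endomorphisms (endomorphisms preserving the flags), while infinitesimal automorphisms are controlled by $H^0$. For a stable parabolic bundle $H^0(\mathrm{ParEnd}(E_\bullet)) = \C$, so by Riemann-Roch on $X$,
\begin{equation*}
\dim H^1 = \dim H^0 - \chi(\mathrm{ParEnd}(E_\bullet)) = 1 - \bigl(\deg(\mathrm{ParEnd}(E_\bullet)) + r^2(1-g)\bigr).
\end{equation*}
The ordinary endomorphism bundle $\mathrm{End}(E)$ has degree zero and rank $r^2$, contributing $r^2(g-1)$; the parabolic conditions cut down the sheaf at each $x\in S$ by the codimension of the parabolic of the flag inside $\mathfrak{gl}(E_x)$, which is exactly $\tfrac12\bigl(r^2-\sum_{i=1}^{k_x}(r_i^x)^2\bigr)$, the dimension of the flag variety at $x$. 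Summing these local contributions over $S$ and adding the term $+1$ coming from the $\C$ of scalar automorphisms (equivalently, the determinant/central direction) yields precisely
\begin{equation*}
r^2(g-1) + 1 + \sum_{x\in S}\tfrac12\Bigl(r^2 - \sum_{i=1}^{k_x}(r_i^x)^2\Bigr).
\end{equation*}

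\medskip

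I expect the main obstacle to be the GIT construction itself, specifically pinning down the linearization so that the numerical GIT stability inequality reproduces parabolic (semi)stability with the given real-number weights $\alpha_i^x$; matching the discrete GIT data to the continuous parabolic weights is the technical heart of \cite{MS80} and requires a careful Hilbert-Mumford computation. By contrast, the dimension count is routine once one knows the moduli space is smooth of the expected dimension at stable points, which follows from the vanishing of obstructions $H^2$ (automatic here since $X$ is a curve, so $H^i$ vanishes for $i\geq 2$) together with the deformation theory of parabolic bundles. Since this theorem is quoted from \cite{MS80}, I would present the above as the conceptual outline and refer to that source for the full GIT verification.
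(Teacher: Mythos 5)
Your proposal is correct and follows exactly the route of the cited source: the paper itself offers no proof of this statement (it is quoted from \cite{MS80}), and your outline --- GIT quotient of a Quot-scheme-with-flags parameter space, linearized so Hilbert--Mumford stability matches parabolic stability, plus a Riemann--Roch count of $H^1(\mathrm{ParEnd}(E_\bullet))$ at a stable point --- is precisely the Mehta--Seshadri construction and the standard dimension computation. Your arithmetic also checks out: $\dim H^1 = 1 - \chi = 1 + r^2(g-1) + \sum_{x\in S}\tfrac12\bigl(r^2-\sum_i (r_i^x)^2\bigr)$, since $\mathrm{ParEnd}(E_\bullet)$ has rank $r^2$ and degree $-\sum_{x\in S}\dim\mathrm{Flag}(E_x)$.
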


\begin{theorem}\cite{MS80}
A holomorphic parabolic bundle $E_\bullet$ of parabolic degree $0$ is stable if and only if there 
is an irreducible unitary representation $\rho\colon  \pi_1(X\setminus S)\ra U(r)$ such that 
$E_\bullet \cong E^\rho_\bullet$, where $E^\rho_\bullet$ is a holomorphic parabolic bundle 
associated to $\rho$.
\end{theorem}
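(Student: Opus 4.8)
The plan is to prove the two implications separately, as they are of very different character. For the direction $(\Leftarrow)$, I would first recall the construction of $E^\rho_\bullet$. Given $\rho\colon \pi_1(X\setminus S)\ra U(r)$, form the flat unitary bundle $(E_\rho,\nabla_\rho)$ on $X\setminus S$ as the associated bundle $\widetilde{X\setminus S}\times_\rho \C^r$. Near a parabolic point $x$, the local monodromy $\rho(\gamma_x)$ around a small loop $\gamma_x$ is unitary, hence conjugate to a diagonal matrix with eigenvalues $e^{2\pi\sqrt{-1}\,\alpha^x_j}$, where $0\le \alpha^x_1<\dots<\alpha^x_{k_x}<1$; taking Deligne's canonical extension across each puncture and declaring the $e^{2\pi\sqrt{-1}\,\alpha^x_j}$-eigenspace to carry weight $\alpha^x_j$ produces a holomorphic parabolic bundle $E^\rho_\bullet$ on $X$ whose flag types and weights are exactly the eigenvalue data. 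A residue computation shows $\deg(E^\rho)=-\mathrm{wt}(E^\rho)$, so that $\mathrm{par}\deg(E^\rho)=0$ as required.

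For stability in $(\Leftarrow)$, I would run the curvature argument adapted to the parabolic setting. Let $F\subset E^\rho$ be a nonzero proper holomorphic subbundle with the induced parabolic structure, and let $\beta\in A^{1,0}(\mathrm{Hom}(F, E^\rho/F))$ be its second fundamental form with respect to the flat unitary metric. A Chern--Weil / Gauss--Codazzi computation, carried out on $X\setminus S$ with control of the metric growth dictated by the weights near $S$, gives $\mathrm{par}\deg(F)=-\tfrac{1}{2\pi}\int_{X}|\beta|^2\le 0$, with equality precisely when $\beta\equiv 0$, i.e.\ when $F$ is $\nabla_\rho$-parallel, equivalently $\rho$-invariant. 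Since $\rho$ is irreducible there is no such proper invariant subspace, forcing $\mathrm{par}\mu(F)<0=\mathrm{par}\mu(E^\rho)$. Hence $E^\rho_\bullet$ is stable.

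The direction $(\Rightarrow)$ is the analytic core, and is where I expect the real difficulty. Starting from a stable parabolic bundle $E_\bullet$ of parabolic degree $0$, the goal is to produce a flat unitary connection on $E|_{X\setminus S}$ whose local monodromies realise the prescribed weights; its holonomy then defines $\rho$. Following the gauge-theoretic scheme of Donaldson adapted to the parabolic case by Biquard, I would fix a Hermitian metric with the model singular behaviour near each $x\in S$ matching the weights $\alpha^x_j$, work in the appropriate weighted Sobolev spaces in the sense of Lockhart--McOwen, and seek a complex gauge transformation carrying the Chern connection of $E_\bullet$ to a Hermitian--Einstein connection, i.e.\ a solution of $\Lambda F = 0$ in the sense appropriate to the singular metric. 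Because $\mathrm{par}\deg(E)=0$, this Hermitian--Einstein connection is genuinely flat, and its holonomy around $\gamma_x$ has eigenvalues $e^{2\pi\sqrt{-1}\,\alpha^x_j}$, recovering the parabolic data.

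The hard part will be the existence of this Hermitian--Einstein metric: this is precisely the Kobayashi--Hitchin correspondence in the parabolic setting, and parabolic stability of $E_\bullet$ is exactly the condition ensuring that the relevant Donaldson functional (equivalently, the Hermitian--Einstein heat flow) converges without a destabilising subbundle obstructing the limit. The two subtle technical points are (i) controlling the singular analysis near the punctures, where the metric degenerates at a rate governed by the weights and one must use weighted spaces to retain Fredholm theory, and (ii) upgrading stability from the finitely many parabolic subbundles to the $L^2$ sub-sheaf estimate needed for convergence. Once the flat unitary connection is obtained, irreducibility of the resulting $\rho$ follows by reversing the curvature argument of $(\Leftarrow)$: a proper $\rho$-invariant subspace would yield a parabolic subbundle $F$ with $\mathrm{par}\mu(F)=0=\mathrm{par}\mu(E)$, contradicting stability. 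Finally, independence of $\rho$ from the auxiliary choices, hence $E_\bullet\cong E^\rho_\bullet$, follows from the uniqueness part of the correspondence.
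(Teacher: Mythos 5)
The paper offers no proof of this statement: it is quoted verbatim as background from \cite{MS80}, and the paper's own work begins only afterwards, so the only meaningful comparison is with the cited source and with the material the paper recalls next. Your outline is sound, but it is not Mehta and Seshadri's argument. In \cite{MS80} the correspondence is established algebro-geometrically: the moduli space is built by GIT, and the identification of stable parabolic bundles of parabolic degree $0$ with irreducible unitary representations of $\pi_1(X\setminus S)$ is obtained by extending the continuity/specialization method of Narasimhan--Seshadri \cite{NS65}; no Hermitian--Einstein equation or heat flow appears. What you propose --- Deligne extension plus a Chern--Weil/second-fundamental-form estimate $\mathrm{par}\mathrm{deg}(F)=-\tfrac{1}{2\pi}\int_X |\beta|^2\le 0$ for the easy direction, and a singular Hermitian--Einstein metric produced in weighted Sobolev spaces for the hard direction --- is precisely the later gauge-theoretic route of Donaldson \cite{Do83} as adapted to the parabolic case by Biquard \cite{Bi91}, which is exactly the formulation this paper recalls as Theorem \ref{Biquard-thm} and builds on in Sections \ref{sec-3} and \ref{sec-4}. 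Each route buys something different: yours gives the analytic picture (flat connections, the quotients $\mathcal{A}^p_\mathrm{ss}/\mathcal{G}^p_h$) that the rest of this paper actually needs, while the original proof gives the structure of a normal projective variety on the moduli space, which no purely analytic argument provides. One caveat so that you do not mistake the outline for a proof: the entire content of the hard direction is deferred in your proposal to "the Kobayashi--Hitchin correspondence in the parabolic setting," but that correspondence \emph{is} the theorem in this degree-zero case; its proof requires the Fredholm theory for the weighted spaces of \cite{LM85}, the convergence of the Donaldson functional without destabilizing subsheaves, and the regularity statement matching the local holonomy eigenvalues $e^{2\pi\sqrt{-1}\alpha^x_j}$ to the parabolic weights --- all of which is the substance of \cite{Bi91}. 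You correctly locate where the work lies, but none of it is supplied.
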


\subsection{Gauge theoretic formulation}
For a smooth complex parabolic vector bundle $E$ of rank $r$ on $X$ with parabolic type $\tau_p$, 
let $\mathcal{C}$ denote the space of holomorphic structure on $E$, more precisely, the space of 
operators
$$
\parE \colon A^0(E)\ra A^{0,1}(E); \quad 
\bar{\partial}_E(fs) = f\bar{\partial}_E(s) + (\bar{\partial}f)s \,.
$$
Throughout the article, $A^p(E)$($A^{p,q}(E)$, respectively) denotes space of smooth $p$(smooth $(p,q)$, respectively) forms on base space with values in bundle.
Then, there is a bijective correspondence between $\mathcal{G}_{\mathrm{par}}$-orbits in $\mathcal{C}_E$ and 
the isomorphism classes of holomorphic parabolic bundles on $X$ having parabolic type $\tau_p$, where
$$
\mathcal{G}_{\mathrm{par}} := \{g\in C^\infty(\mathrm{Aut}(E))\;|\; g~\mbox{respect the flag of}~ E_x, ~\mbox{for all}~ x\in S\}\,.
$$
Let us first review Biquard's formulation of Mehta-Seshadri theorem \cite{MS80, Bi91}. 

\subsection*{Sobolev spaces}
Let $E_\bullet$ be a smooth complex vector bundle on $X$ with parabolic structure over $S$. The weighted 
Sobolev norm for $s\in A^\ell(E)$ is defined as
$$
\| s \|_{W^{k, p}_\delta}:= \sum_{j=0}^k \| \bigtriangledown^j s \|_{L^p_\delta}\,,
$$
where $\| \cdot \|_{L^p_\delta}$ weighted $L^p$ norm with weight $\delta$ (for more details, see \cite{LM85, Bi91} ). 
We denote by $W^{k, p}_\delta(E)$  the completion of $A^\ell(E)$ with respect to the weighted Sobolev 
norm $\| \cdot \|_{W^{k, p}_\delta}$.

For $\delta = k - 2/p$, we let $W^p_k := W^{k, p}_\delta$. 

Let 
$F = \mathbb{D} \times \C^r$ be a trivial vector bundle with the standard metric. 
Let $0\leq \alpha_1 < \alpha_2 \cdots < \alpha_\ell < 1$ be the fixed real numbers, let $\alpha$ be 
the matrix
\[
\alpha = \begin{pmatrix}
\tilde{\alpha}_1 & & \\
& \ddots & \\
& & \tilde{\alpha}_r
\end{pmatrix}
\]
where $0\leq \tilde{\alpha}_1 \leq \tilde{\alpha}_2 \cdots \leq \tilde{\alpha}_r < 1$ in which $\alpha_i$ are re-labeled
according to their multiplicities.

Consider the decomposition of $F$ by the eigenspaces $E_{\alpha_i}$ of 
$\alpha = \mathrm{diag}(\tilde{\alpha}_1, \dots, \tilde{\alpha}_r)$. Then, for any $u\in \mathrm{End}(E)$, 
we have
$$
u^D\in \bigoplus_i \mathrm{Hom}(E_{\alpha_i}, E_{\alpha_i}) \quad \mbox{and} \quad 
u^H\in \bigoplus_{i\neq j} \mathrm{Hom}(E_{\alpha_i}, E_{\alpha_j})\,.
$$
Consider the space 
$$
\mathcal{D}^p_k(\mathrm{End}(F)) = 
\{u\in \mathrm{End}(F) \;|\; u^D\in L^p_k(\mathrm{End}(F)), \; u^H\in W^p_k(\mathrm{End}(F)) \}
$$
with the norm $\|u\|_{\mathcal{D}^p_k} = \|u^D\|_{L^p_k} + \|u^H\|_{W^p_k}$.

Let $E_\bullet$ be a smooth complex parabolic vector bundle on $X$ with parabolic structure over $S$. 
Let $x\in S$, and let $z$ be a local coordinate on $X$ at $p$ such that $z(x)=0$. Let $\{s_i\}$ be a local frame 
of $E$ at $x$. We say that a local frame $\{s_i\}$ of $E$ at $p$ respect the flag structure at $x$ if $F^jE(x)$ 
is generated by $\{s_i(x)\}_{i\geq r-\dim F^jE(x)+1}$.

Consider a Hermitian metric $h$ in $E_{|X-S}$. We say that a metric $h$ is $\alpha$-adapted if for any 
parabolic point $x\in S$, the following holds: Choose a local coordinate $z$ and a local frame $\{e_i\}$ of 
$E$ near $x$ which respect the flag structure at $x$. Then, there is a gauge transformation $g$ near $x$
such that in the local frame $\{g(e_i)\}$, one has
\begin{equation}\label{h-matrix}
h = \begin{pmatrix}
|z|^{2\alpha_1} & & 0\\
& \ddots & \\
0 & & |z|^{2\alpha_\ell}
\end{pmatrix}\,.
\end{equation}

Let $(E_\bullet, h)$ be a smooth Hermitian vector bundle on $X$ with parabolic structure over $S$, where
$h$ is an adapted Hermitian metric with respect to the given parabolic structure over $S$.

Recall that the Chern connection associated with the adapted hermitian metric on holomorphic parabolic bundle $E$ is
$$
d^h=d+h^{-1}\partial h=d+\alpha\frac{dz}{z}
$$

With respect to the adapted frame $(\frac{e_i}{|z|^{\alpha^i}})$, we have
\[
\begin{array}{ll}
d^h(\frac{e_i}{|z|^{\alpha_i}}) 
& = e_i d(\frac{1}{|z|^{\alpha_i}})+\frac{1}{|z|^{\alpha_i}}d^h(e_i) \\
& \\
& = -e_i\cdot\frac{\alpha_i}{2|z|^{\alpha_i+2}}(\overline{z}dz+zd\overline{z})+e_i\frac{\alpha_i}{z|z|^{\alpha_i}}dz \\
& \\
& = \frac{e_i}{|z|^{\alpha_i}}\frac{\alpha_i}{2}(\frac{dz}{z}-\frac{d\overline{z}}{\overline{z}})\,.
\end{array}
\]
From this, it follows that $d^h = d+\frac{\alpha}{2}(\frac{dz}{z}-\frac{d\overline{z}}{\overline{z}})$ 
in the adapted frame $(\frac{e_i}{|z|^{\alpha^i}})$.

Let $\mathcal{A}$ denote the space of $h$-unitary connections associated with the holomorphic structure 
$\parE\in \mathcal{C}$.
Consider the unitary gauge group of $(E, h)$ defined by
$$
\mathcal{G}_h = \{g \in \mathcal{G}_{\mathrm{par}} \;\mid\; g_{|_{X\text{--} S}} ~\mbox{is}~ h\text{-}\mbox{unitary}\}.
$$
Let $\mathcal{C}^p$ be the space of Daulbault operator $\parE$ of class $L^p_1$ on $X-S$ and is of the form
$$
\bar{\partial} - \frac{1}{2}\alpha\frac{d\bar{z}}{\bar{z}} + a
$$
near $x\in S$ in any local frame adapted to $E$ with $a\in \mathcal{D}^p_1$.
Let $\mathcal{G}_\C^p$ be the space of Sobolev gauge transformations of $E$ of class $L^p_2$ on $X-S$
and of class $\mathcal{D}^p_2$ near $x\in S$.

Let $\mathcal{A}^p$ be the space of $h$-unitary Sobolev connections on $E$ of class $L^p_1$ on $X-S$ 
and is of the form
$$
d + \alpha\frac{dz}{z} + a
$$
near $x\in S$ in any local frame adapted to $E$ with $a\in \mathcal{D}^p_1$. We denote by 
$\mathcal{G}^p_h$ a group of unitary Sobolev gauge transformations. 

The action of the Lie group $\mathcal{G}^p$ on a connection $A = D + a \in \mathcal{A}^p$ is given by
$$
g(D + a) = D + gag^{-1} - (Dg)g^{-1}\,.
$$

The curvature of a connection $A = D + a \in \mathcal{A}^p$ is given by
$$
F_A = F_{D} + Da + \frac{1}{2}[a, a]\,.
$$
If $p\in (1, 2)$, then $D^p_1(A^1(\mathfrak{u}(E, h)))=L^p_1(A^1(\mathfrak{u}(E, h)))$, and 
hence we have the curvature map $F\colon \mathcal{A}^p \ra L^p(A^2(\mathfrak{u}(E, h)))$ 
(\cite[Lemma 1.1]{U82}).

Let $p\in (1, 2)$ satisfying
\begin{equation}\label{eq-1}
p < 
\left \{ \begin{array}{cc}
\frac{2}{2 + \alpha_j - \alpha_i} & \mbox{if} \quad \alpha_i > \alpha_j; \\
& \\
\frac{2}{1 + \alpha_j - \alpha_i} & \mbox{if} \quad \alpha_i < \alpha_j 
\end{array} \right.
\end{equation}
Then, the operator 
$$
\bar{\partial}_E \colon D^p_2(\mathrm{End}(E))\ra D^p_1(A^{0, 1}\otimes \mathrm{End}(E))
$$
is Fredholm. Using Fredholmness of this operator, it follows that any operator 
$\bar{\partial}_E \in \mathcal{C}^p$ 
is equivalent under the complex gauge group $\mathcal{G}^p_\C$ to a smooth operator on $X$ 
(i.e. which is in $\mathcal{C}$) \cite[Proposition 2.8]{Bi91} (cf. \cite[Lemma 14.8]{AB82}).
There are bijective correspondences
$$
\mathcal{A}^p/\mathcal{G}_h^p \simeq \mathcal{A}/\mathcal{G}_h \simeq \mathcal{C}/\mathcal{G}_{\mathrm{par}}.
$$

\begin{theorem}\cite{Bi91}\label{Biquard-thm}
Let $\sheaf{E}_\bullet$ be an indecomposable parabolic bundle with an adapted Hermitian metric $h$.
Then $\sheaf{E}_\bullet$ is parabolic stable if and only if there exists on $\sheaf{E}$ a connection 
$A\in \mathcal{A}$ satisfying
$$
\star F_A = -2\pi \sqrt{-1} \mathrm{par}\mu(E)\,.
$$
This connection is unique up to the action of the gauge group $\mathcal{G}_h$.
\end{theorem}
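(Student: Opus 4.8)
The statement is a Hitchin--Kobayashi type correspondence, so my plan is to prove the two implications separately and then treat uniqueness. Throughout I would work in the weighted Sobolev framework set up above (the spaces $L^p_k$, $W^p_k$ and $\mathcal{D}^p_k$), with $p\in(1,2)$ chosen to satisfy \eqref{eq-1}, using freely the Fredholm property of $\bar\partial_E$, the regularity statement that every $\bar\partial_E\in\mathcal{C}^p$ is $\mathcal{G}^p_\C$--equivalent to a smooth operator, and the identifications $\mathcal{A}^p/\mathcal{G}^p_h\simeq\mathcal{A}/\mathcal{G}_h\simeq\mathcal{C}/\mathcal{G}_{\mathrm{par}}$. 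These reduce the analytic problem on $X-S$ to a problem about a fixed $\mathcal{G}^p_\C$--orbit, i.e.\ a fixed holomorphic parabolic bundle, on which one searches for a unitary connection realizing the prescribed central curvature equation; the modern framing is that such a connection is a zero of the moment map for the $\mathcal{G}_h$--action, so the whole theorem is a Kempf--Ness statement in infinite dimensions.

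For the easy direction (existence of $A$ implies stability) I would run the parabolic Chern--Weil argument. Given $A\in\mathcal{A}$ with $\star F_A=-2\pi\sqrt{-1}\,\mathrm{par}\mu(E)$ and any parabolic subbundle $F\subset E$, orthogonal projection with respect to $h$ produces the second fundamental form $\beta$, and the standard subbundle curvature identity expresses $\star F_{A|_F}$ in terms of $\star F_A$ and a term $-|\beta|^2$. Integrating over $X-S$ and using the adapted metric's singular normal form $h=\mathrm{diag}(|z|^{2\alpha_i})$ near each $x\in S$, the boundary contributions assemble precisely into the parabolic weight terms $d_x(F)$, so that $\int\star F_{A|_F}$ computes $-2\pi\sqrt{-1}\,\mathrm{par}\mbox{deg}(F)$ up to the nonnegative second-fundamental-form correction. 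Comparing with the Einstein value yields $\mathrm{par}\mu(F)\le\mathrm{par}\mu(E)$, with equality forcing $\beta\equiv 0$ and hence a holomorphic splitting; indecomposability of $\sheaf{E}_\bullet$ then rules equality out and gives strict inequality, i.e.\ stability.

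For the hard direction (stability implies existence) I would follow Donaldson's variational scheme adapted to the punctured surface. Fixing the holomorphic structure $\bar\partial_E$, I vary over the complex gauge orbit, equivalently over adapted Hermitian metrics, and seek a minimizer of the functional $\|\star F_A+2\pi\sqrt{-1}\,\mathrm{par}\mu(E)\|^2$. The analytic heart is an a priori $C^0$--estimate on the metric: along a minimizing sequence one shows the metrics cannot degenerate, since a blow-up of the integrand would, via weak compactness together with an Uhlenbeck-type removable-singularity argument in the $\mathcal{D}^p_k$ spaces, produce a coherent parabolic subsheaf violating the bound $\mathrm{par}\mu(F)<\mathrm{par}\mu(E)$ guaranteed by stability. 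Once this estimate holds, the Fredholm and regularity results quoted above upgrade the resulting weak solution to a smooth unitary connection $A\in\mathcal{A}$ solving the equation.

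Uniqueness up to $\mathcal{G}_h$ I would obtain from convexity: the relevant functional is geodesically convex along the one-parameter families of metrics $h_t$ joining two solutions, so two Einstein connections in the same $\mathcal{G}^p_\C$--orbit differ by a unitary gauge transformation, and indecomposability removes the residual constant ambiguity. I expect the main obstacle to be exactly the a priori $C^0$--estimate at the punctures: controlling the singular behaviour of the curvature near each $x\in S$ in the weighted norms, and ensuring that the destabilizing subsheaf extracted in the blow-up is genuinely parabolic — that it inherits the flags and weights rather than merely an underlying holomorphic subbundle — is where the weighted Sobolev analysis of \cite{LM85, Bi91} and the precise range \eqref{eq-1} of $p$ become essential.
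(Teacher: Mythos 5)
The paper does not actually prove this statement: it is Biquard's theorem, quoted with the citation \cite{Bi91} as background for the gauge-theoretic setup of Section 2, so there is no internal proof to compare yours against. Your outline --- parabolic Chern--Weil with the adapted metric for the easy direction, Donaldson's variational scheme in the weighted Sobolev spaces $\mathcal{D}^p_k$ for existence, and convexity of the metric functional for uniqueness, with the a priori $C^0$-estimate at the punctures correctly identified as the analytic core --- follows the same strategy as Biquard's cited proof (itself modeled on \cite{Do83} and the analysis of \cite{LM85}), so your approach matches the source on which the paper relies.
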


Let 
$$\mathcal{C}_{\mathrm{s}}:= \{\bar{\partial}_E \in \mathcal{C}\;|\; (E_\bullet, \bar{\partial}_E) ~ 
\mbox{is stable parabolic bundle}\}$$ and 
$$
\mathcal{A}^p_\mathrm{ss}:= F^{-1}(2\pi \sqrt{-1}\mathrm{par}\text{-}\mu(E))
$$
$$
\mathcal{A}^p_s:= \{A\in \mathcal{A}^p_\mathrm{ss} \;|\; d_A ~ \mbox{is irreducible}\}
$$ 
Then, we have the following commutative diagram
\[
\xymatrix{
\mathcal{C}_{\mathrm{s}}\ar[r]^\iota \ar[d]_q & \mathcal{A}^p_s \ar[d]^\pi \\
M_X^s(\tau_p) \ar[r]_\varphi & \mathcal{A}^p_s/\mathcal{G}^p_h \;.
}
\]
The set $M_X^s(\tau_p) := \mathcal{C}^{\mathrm{s}}/\mathcal{G}_{\mathrm{par}} \cong \mathcal{A}^p_s/\mathcal{G}^p_h$
has a natural structure of K\"ahler manifold.
In fact, Konno \cite{Ko93} studied the moduli of more general objects, namely parabolic Higgs bundles, using the
weighted Sobolev spaces defined by Biquard \cite{Bi91}. 

\subsection{Vector bundles on real curves}\label{real}
By a real curve we will mean a pair $(X, \sigma_X)$, where
$X$ is a Riemann surface, and $\sigma_X$ is an anti-holomorphic
involution on $X$. Let $\sigma_{\C} : \C\ra \C$ be the conjugate
map $z\mapsto \bar{z}$. 

A \emph{real (resp. quaternionic) holomorphic vector bundle}
\index{vector bundle! real}
$E\ra X$ is a holomorphic vector bundle, together with
an anti-holomorphic involution (resp. anti-involution) $\rst{E}$ of the total
space $E$ making the diagram
\[
\xymatrix{
E \ar[r]^{\rst{E}} \ar[d] & E \ar[d] \\
X \ar[r]_{\sigma_X} & X
}
\]
commutative, and such that, for all $x\in X$, the map
$\rst{E}|_{E(x)} \,:\, E(x)\ra E(\sigma_X(x))$ is
$\C$-antilinear:
$$
\rst{E}(\lambda\cdot \eta) =
\Bar{\lambda}\cdot \rst{E}(\eta), ~
\mbox{for all}~ \lambda\in \C~ \mbox{and all}~
\eta \in E(x).
$$
We refer to the map $\rst{E}$ as the \emph{real structure} of $E$.
Giving a real structure $\rst{E}$ on $E$ is equivalent to giving a 
$\C$-linear isomorphism $\phi\colon \sigma_X^*\overline{E}\ra E$
such that $\sigma_X^*\overline{\phi}\circ \phi = \mathrm{Id}_E$.

A homomorphism between two real bundles
$(E, \rst{E})$ and $(E', \rst{E'})$
is a homomorphism
$$
f \,:\, E\ra E'
$$
of holomorphic vector bundles over $X$ such that
$f\circ \rst{E} = \rst{E'}\circ f$.

A holomorphic subbundle $F$ of a real holomorphic
vector bundle $E$ is said to be a real subbundle of
$E$ if $\rst{E}(F) = F$.

We refer to \cite{BHH10} for topological classification 
of real and quaternionic bundles. See also \cite{Sch12} for discussion on the stability of such bundles
over a real curve.

\section{Parabolic bundles over real curves}\label{sec-3}
Let $(E, \rst{E})$ be a smooth real (resp. quaternionic) vector bundle over a real curve 
$(X, \sigma_X)$. Let $S$ be a finite subset of $X$ such that $\sigma_X(S) = S$.

\begin{definition}\label{def-par-real-quat}
A \emph{parabolic structure} on $(E, \rst{E})$ over $S$ is a quasi-parabolic structure on
$(E, \rst{E})$ over $S$:
\begin{itemize}
\item for each $x\in S$, there is a strictly decreasing flag
\begin{equation*}
E(x) = F^1E(x)\supset F^2E(x)\supset \dots \supset F^{k_x}E(x) \supset F^{k_x+1}E(x) = 0
\end{equation*}
of linear subspaces in $E(x)$ satisfying $\rst{E}_x(F^iE(x)) = F^iE(\sigma_X(x))$
\end{itemize}
together with a sequence of real numbers $0\leq \alpha^x_1 < \dots < \alpha^x_{k_x} < 1$, 
with the following property:
\begin{itemize}
 \item the weights over $x$ and
$\sigma_X(x)$ are same.
\end{itemize}
\end{definition}

A smooth real (resp. quaternionic) vector bundle $(E, \rst{E})$ together with a parabolic structure
as in Definition \ref{def-par-real-quat} will be referred to as a smooth real (resp. quaternionic) parabolic
vector bundle, and we denoted it by $(E_\bullet, \rst{E})$. 
A real (resp. quaternionic) holomorphic vector bundle $(E, \rst{E})$ together with a parabolic structure
as in Definition \ref{def-par-real-quat} will be referred to as a real (resp. quaternionic) parabolic
vector bundle. 

\begin{remark}\rm{
Let $E_\bullet$ be a holomorphic parabolic bundle on $(X, S)$. Then, $\sigma_X^*\overline{E}$ gets an induced 
parabolic structure, and the resulting holomorphic parabolic bundle is denoted by 
$\sigma_X^*\overline{E}_\bullet$. If $(E_\bullet, \rst{E})$ is a real parabolic bundle over $(X, \sigma_X)$, then
there is an isomorphism $\phi\colon E_\bullet \ra \sigma_X^*\overline{E}{_\bullet}$ of holomorphic
parabolic bundles such that $\sigma^*\overline{\phi}\circ \phi = \mathrm{Id}_E$ (see \cite{BS20}). 
A similar statement holds in the quaternionic case.
}
\end{remark}

Let us recall the definition of real parabolic 
semi-stable bundles over a real curve (see \cite{Am14a, BS20}). A real parabolic bundle
$(E, \rst{E})$ is called \emph{real semistable} if
for every real parabolic subbundle $F$ of $E$, we have
\begin{equation}\label{parsc}
\mathrm{p}\mu(F)\leq \mathrm{p}\mu(E).
\end{equation}
We say that a real parabolic bundle $(E, \rst{E})$ is \emph{real stable} if
the inequality \eqref{parsc} is strict, i. e., $\mathrm{p}\mu(F) < \mathrm{p}\mu(E)$
for every proper real subbundle $F$ of $E$.

\begin{proposition}\label{prop-rsp-hsp}
Let $(E, \rst{E})$ be a real (resp. quaternionic) semistable parabolic bundle on $(X, \sigma_X)$. 
Then, the underlying holomorphic parabolic bundle $E_\bullet$ is parabolic semistable.
\end{proposition}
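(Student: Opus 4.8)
The plan is to argue by contrapositive, exploiting the involution $\sigma_X$ to produce, from any destabilizing holomorphic subbundle, a $\sigma^E$-invariant destabilizing subbundle that would contradict real (resp.\ quaternionic) semistability. Suppose the underlying holomorphic parabolic bundle $E_\bullet$ is \emph{not} parabolic semistable. Then there exists a holomorphic parabolic subbundle $F \subset E$ with $\mathrm{par}\mu(F) > \mathrm{par}\mu(E)$. The first step is to show that we may take $F$ to be the maximal destabilizing subbundle, i.e.\ the first term of the Harder--Narasimhan filtration of $E_\bullet$: among all subbundles maximizing the parabolic slope, there is a unique one of maximal rank, which is canonically associated to $E_\bullet$.

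The key step is to transport $F$ through the real structure. Since $\rst{E}$ covers $\sigma_X$ and is $\C$-antilinear, the image sheaf $\rst{E}(F)$ is naturally a holomorphic parabolic subbundle of $\sigma_X^*\overline{E}_\bullet$; composing with the isomorphism $\phi\colon E_\bullet \to \sigma_X^*\overline{E}_\bullet$ of the real structure, one obtains a holomorphic parabolic subbundle $F^\sigma \subset E$. Because $\sigma_X$ is an involution reversing orientation and the weights over $x$ and $\sigma_X(x)$ agree by Definition~\ref{def-par-real-quat}, the parabolic degree is preserved: $\mathrm{par}\mu(F^\sigma) = \mathrm{par}\mu(F) > \mathrm{par}\mu(E)$. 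Here I would check carefully that $\deg$ is unchanged (conjugation and pullback by an orientation-reversing diffeomorphism together preserve the degree) and that the weight contribution $\mathrm{wt}$ matches up under the correspondence $x \leftrightarrow \sigma_X(x)$.

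Now invoke uniqueness of the maximal destabilizer. Since $F^\sigma$ has the same rank and slope as the canonical maximal destabilizing subbundle $F$, and the latter is unique, we conclude $F^\sigma = F$ as subbundles of $E$. Unwinding the construction, this exactly says $\rst{E}(F) = F$, so $F$ is a \emph{real} (resp.\ quaternionic) parabolic subbundle of $E$. But then $F$ violates the defining inequality \eqref{parsc} for real (resp.\ quaternionic) semistability, contradicting the hypothesis that $(E, \rst{E})$ is real (resp.\ quaternionic) semistable. Hence $E_\bullet$ must be parabolic semistable.

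The main obstacle I expect is the bookkeeping in the middle step: verifying that the parabolic slope is genuinely invariant under $F \mapsto F^\sigma$. One must confirm both that the ordinary degree is preserved under the antiholomorphic conjugate-pullback (so that no sign is introduced) and that the induced parabolic structure on $F^\sigma$ at a point $x$ corresponds correctly to that of $F$ at $\sigma_X(x)$, using that $\rst{E}_x(F^iE(x)) = F^iE(\sigma_X(x))$ and that the weights are symmetric under $\sigma_X$. In the quaternionic case $\rst{E}$ is an anti-involution ($(\rst{E})^2 = -\mathrm{Id}$ on fibers), but since $F^\sigma$ depends only on the induced action on subbundles and the sign is immaterial to the equality $F^\sigma = F$, the same argument applies verbatim.
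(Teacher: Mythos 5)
Your proposal is correct and follows essentially the same route as the paper: both arguments take the unique maximal destabilizing subbundle $F$ of $E_\bullet$ (the first step of the Harder--Narasimhan filtration, cited in the paper from Seshadri), transport it through the real structure to the subbundle $\phi(\sigma_X^*\overline{F})$ of equal rank and parabolic slope, invoke uniqueness to conclude $\rst{E}$-invariance of $F$, and then contradict the real (resp.\ quaternionic) semistability inequality \eqref{parsc}. Your additional care about degree and weight bookkeeping, and the remark that the sign $(\rst{E})^2 = -\mathrm{Id}$ is immaterial in the quaternionic case, are points the paper leaves implicit but change nothing in substance.
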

\begin{proof}
Let $\phi\colon E_\bullet \ra \sigma^*\overline{E}_\bullet$ be an isomorphism of holomorphic
parabolic bundles such that $\sigma^*\overline{\phi}\circ \phi = \mathrm{Id}_E$.
If $E_\bullet$ is not parabolic semistable, then by \cite[Theorem 8]{Se82}, there exists a unique maximal
destabilizing subbundle $F$ of $E$. Note that $\phi(\sigma_X^*\overline{F})$ and $F$ 
are subbundles of $E$ having same rank and parabolic degree with respect to the induced 
parabolic structure. Since $F$ is the maximal destabilizing subsheaf of $E$ (for parabolic 
semistability), it follows that $\phi(\sigma_X^*\overline{F})$ is the maximal destabilizing subsheaf of 
$\phi(\sigma_X^*\overline{E})$. Hence, by the uniqueness, we have $\phi(\sigma_X^*\overline{F}) = F$. 
Since $(E, \rst{E})$ is real (resp. quaternionic) semistable parabolic bundles, we have 
$\mathrm{p}\mu(F) \leq \mathrm{p}\mu(E)$, which is a contradiction.
\end{proof}

The following result is a generalization of \cite[Proposition 2.7]{Sch12} to real parabolic bundles. 
The proof is identical, with some additional arguments.

\begin{proposition}\label{prop-rsp}
Let $(E, \rst{E})$ be a real (resp. quaternionic) stable parabolic bundle on $(X, \sigma_X)$. Then, 
one of the following holds:
\begin{enumerate}
\item The underlying holomorphic parabolic bundle $E_\bullet$ is stable parabolic.
\item There exists a holomorphic subbundle $F$ of $E$ such that $F_\bullet$ is 
stable parabolic and $(E, \rst{E})$ is isomorphic to 
$F_\bullet\oplus (\sigma_X^*\overline{F})_\bullet$ as real (resp. quaternionic) 
parabolic bundle.
\end{enumerate}
\end{proposition}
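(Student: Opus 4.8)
The plan is to analyze the underlying holomorphic parabolic bundle $E_\bullet$ according to whether or not it is stable. If $E_\bullet$ is stable parabolic, we are in case (1) and there is nothing to prove. So assume $E_\bullet$ is \emph{not} stable. By Proposition \ref{prop-rsp-hsp}, $E_\bullet$ is parabolic semistable, so it fails to be stable only because there exists a proper subbundle $F$ with $\mathrm{p}\mu(F) = \mathrm{p}\mu(E)$, i.e.\ a subbundle of maximal parabolic slope. First I would consider the real structure $\phi\colon E_\bullet \ra \sigma_X^*\overline{E}_\bullet$ with $\sigma_X^*\overline{\phi}\circ\phi = \mathrm{Id}_E$, and use it to produce, from such an $F$, the subbundle $\phi(\sigma_X^*\overline{F})$, which has the same rank and parabolic slope as $F$.

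The central dichotomy is whether a maximal-slope subbundle $F$ can be chosen so that $\phi(\sigma_X^*\overline{F})$ either equals $F$ or is transverse to it. If some proper $F$ satisfied $\phi(\sigma_X^*\overline{F}) = F$, then $F$ would be a genuine real (resp.\ quaternionic) parabolic subbundle of $(E, \rst{E})$ of slope equal to $\mathrm{p}\mu(E)$, contradicting real (resp.\ quaternionic) \emph{stability} (which forces strict inequality \eqref{parsc} on proper real subbundles). Hence no proper maximal-slope subbundle is $\phi$-real. I would then pass to a Jordan--H\"older–type refinement: among the maximal-slope subbundles, take $F$ to be one that is itself stable parabolic (a minimal nonzero subbundle of maximal slope is automatically stable parabolic, since any proper subbundle of strictly larger slope would contradict semistability of $E_\bullet$ and any of equal slope would contradict minimality). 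For this stable $F$, the image $\phi(\sigma_X^*\overline{F})$ is again a stable parabolic subbundle of the same slope, and since $F$ is not $\phi$-real it is distinct from $F$.

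Next I would show $F$ and $\phi(\sigma_X^*\overline{F})$ are in direct sum inside $E$. Both are stable parabolic of the same slope $\mathrm{p}\mu(E)$; their intersection $F \cap \phi(\sigma_X^*\overline{F})$ is a parabolic subbundle of each, and a nonzero proper parabolic subbundle of a stable parabolic bundle has strictly smaller slope. Pushing this through the standard inequality $\mathrm{p}\mu(F\cap \phi(\sigma_X^*\overline{F})) + \mathrm{p}\mu(F + \phi(\sigma_X^*\overline{F})) \geq \mathrm{p}\mu(F) + \mathrm{p}\mu(\phi(\sigma_X^*\overline{F}))$ for parabolic degrees, together with semistability bounding $\mathrm{p}\mu(F + \phi(\sigma_X^*\overline{F})) \leq \mathrm{p}\mu(E)$, forces the intersection to be either $0$ or all of $F$; the latter is excluded since the two are distinct of equal rank. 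Thus $F \cap \phi(\sigma_X^*\overline{F}) = 0$, and a rank/parabolic-degree count gives $E_\bullet = F_\bullet \oplus (\sigma_X^*\overline{F})_\bullet$ as holomorphic parabolic bundles, realizing case (2).

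The main obstacle, and the place where the \emph{real parabolic} setting demands care beyond the vector bundle argument of \cite[Proposition 2.7]{Sch12}, is verifying that $\phi$ is compatible with the decomposition so that $(E, \rst{E})$ is genuinely isomorphic to $F_\bullet \oplus (\sigma_X^*\overline{F})_\bullet$ \emph{as a real (resp.\ quaternionic) parabolic bundle}, not merely as a holomorphic one. Concretely, I must check that $\phi$ interchanges the two summands: $\phi$ maps $\sigma_X^*\overline{F}$ isomorphically onto $\phi(\sigma_X^*\overline{F}) = F$ by construction, and one must confirm that $\phi$ simultaneously sends $\sigma_X^*\overline{(\phi(\sigma_X^*\overline{F}))}$ back to $\phi(\sigma_X^*\overline{F})$, using the involutivity relation $\sigma_X^*\overline{\phi}\circ \phi = \mathrm{Id}_E$. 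This swap, together with the fact that the flags and weights at each $x\in S$ are carried by $\rst{E}$ to those at $\sigma_X(x)$ (Definition \ref{def-par-real-quat}), yields the desired real (resp.\ quaternionic) parabolic isomorphism; the quaternionic case differs only in the sign bookkeeping coming from $\sigma_X^*\overline{\phi}\circ\phi = -\mathrm{Id}_E$, which I would track in parallel throughout.
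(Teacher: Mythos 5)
Your overall strategy is close to the paper's (both are parabolic versions of Schaffhauser's Proposition 2.7), and much of it is sound. In fact, your choice of $F$ to be of minimal rank among nonzero subbundles of slope $\mathrm{p}\mu(E)$, hence stable parabolic, is a point the paper glosses over even though conclusion (2) requires it. Your intersection argument is also fine: stability of $F$, the inequality $\mathrm{par}\mathrm{deg}(F\cap G) + \mathrm{par}\mathrm{deg}(F+G) \geq \mathrm{par}\mathrm{deg}(F) + \mathrm{par}\mathrm{deg}(G)$ (note that this holds for parabolic \emph{degrees}, not for slopes as you typed it), together with semistability of $E_\bullet$, does force $F \cap \phi(\sigma_X^*\overline{F}) = 0$, and real stability correctly rules out $\phi(\sigma_X^*\overline{F}) = F$.

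The genuine gap is the sentence ``a rank/parabolic-degree count gives $E_\bullet = F_\bullet \oplus (\sigma_X^*\overline{F})_\bullet$.'' Knowing $F \cap \phi(\sigma_X^*\overline{F}) = 0$ only gives an injection of sheaves $F \oplus \phi(\sigma_X^*\overline{F}) \hookrightarrow E$; nothing you have established forces $\mathrm{rank}(E) = 2\,\mathrm{rank}(F)$, and no counting argument can, since a semistable parabolic bundle may contain proper subbundles of equal parabolic slope and of much smaller rank. The missing idea is a \emph{second} appeal to real (resp.\ quaternionic) stability: the subbundle $E''$ generated by the subsheaf $F + \phi(\sigma_X^*\overline{F})$ is $\rst{E}$-invariant (by the involutivity relation, $\rst{E}$ swaps $F$ and $\phi(\sigma_X^*\overline{F})$), so if $E'' \neq E$ then real stability gives $\mathrm{p}\mu(E'') < \mathrm{p}\mu(E)$, whereas $E''$ contains a subsheaf of the same rank and of slope $\mathrm{p}\mu(E)$, so $\mathrm{p}\mu(E'') \geq \mathrm{p}\mu(E)$ --- a contradiction; hence $E'' = E$, which is what delivers $\mathrm{rank}(E) = 2\,\mathrm{rank}(F)$ and only then the degree count. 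This is precisely the role of $E''$ in the paper's proof: the paper applies real stability to both $E'$ (the saturation of $F \cap \phi(\sigma_X^*\overline{F})$) and $E''$; you replaced the $E'$ half by stability of $F$, which works, but the $E''$ half cannot be dispensed with. Finally, a small slip at the end: involutivity shows $\phi$ sends $\sigma_X^*\overline{\phi(\sigma_X^*\overline{F})}$ to $F$, not back to $\phi(\sigma_X^*\overline{F})$ --- that interchange is what makes the isomorphism one of real (resp.\ quaternionic) parabolic bundles.
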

\begin{proof}
If the underlying holomorphic parabolic bundle $E_\bullet$ is not stable parabolic, then there exists a non-zero 
subbundle $F$ of $E$ such that $\mathrm{p}\mu(F) \geq \mathrm{p}\mu(E)$. By Proposition 
\ref{prop-rsp-hsp}, $E_\bullet$ is parabolic semistable, and hence we have $\mathrm{p}\mu(F) = \mathrm{p}\mu(E)$.
In particular, $F_\bullet$ is parabolic semistable. 
Let $E'$ be the subbundle generated by $\rst{E}$-invariant subsheaf
$F\cap \sigma_X^*\overline{F}$ of $E$, and $E''$ be the subbundle generated by 
$\rst{E}$-invariant subsheaf $F + \sigma_X^*\overline{F}$ of $E$. 

Consider the short exact sequence 
$$
0\ra E'_\bullet \ra F_\bullet \oplus (\sigma_X^*\overline{F})_\bullet\ra 
E''_\bullet\ra 0
$$
of parabolic bundles, where the map 
$F_\bullet \oplus (\sigma_X^*\overline{F})_\bullet\ra E''_\bullet$
is a morphism of real parabolic bundles, where $F \oplus (\sigma_X^*\overline{F})$ is 
endowed with real structure $\tilde{\sigma}^+$ (resp. quaternionic structure $\tilde{\sigma}^-$)
(see \cite[page 7]{Sch12}). Since $E'$ and $E''$ are $\sigma^E$-invariant
subbundles of $E$, and $(E, \rst{E})$ is real stable parabolic bundle, we have
$$
\frac{\mathrm{deg}(E') + \mathrm{wt}(E')}{\mathrm{rank}(E')} = 
\mathrm{p}\mu(E') < \mathrm{p}\mu(E) = \mathrm{p}\mu(F) 
$$
and
$$
\frac{\mathrm{deg}(E'') + \mathrm{wt}(E'')}{\mathrm{rank}(E'')} = 
\mathrm{p}\mu(E'') < \mathrm{p}\mu(E) = \mathrm{p}\mu(F) 
$$
Hence, we have
\begin{equation}\label{prop-rsp-eq-1}
\mathrm{rank}(F)(\mathrm{deg}(E') + \mathrm{wt}(E')) <
\mathrm{rank}(E')(\mathrm{deg}(F) + \mathrm{wt}(F))
\end{equation}
\begin{equation}\label{prop-rsp-eq-2}
\mathrm{rank}(F)(\mathrm{deg}(E'') + \mathrm{wt}(E'')) <
\mathrm{rank}(E'')(\mathrm{deg}(F) + \mathrm{wt}(F))
\end{equation}
Note that $\mathrm{deg}(E') + \mathrm{deg}(E'') = 2 \mathrm{deg}(F)$
and $\mathrm{rank}(E') + \mathrm{rank}(E'') = 2 \mathrm{rank}(F)$.
Using this, from \eqref{prop-rsp-eq-1} and \eqref{prop-rsp-eq-2}, we have
$\mathrm{wt}(E') + \mathrm{wt}(E'') < \mathrm{wt}(F)$, which is a contradiction.
Hence, $E' = 0$ and $E'' = E$ (if $E''$ is a proper subbundle of $E$, then
we will have $\mathrm{p}\mu(E'') < \mathrm{p}\mu(F)$. From this, one has
$$
\frac{\mathrm{deg}(F) + \mathrm{wt}(F)}{\mathrm{rank}(F)} =
\frac{\mathrm{deg}(E'') + \mathrm{wt}(E'')}{\mathrm{rank}(E'')} <
\frac{\mathrm{deg}(F) + \mathrm{wt}(F)}{\mathrm{rank}(F)}
$$
i.e. $\mathrm{deg}(F) + \mathrm{wt}(F) < \mathrm{deg}(F) + \mathrm{wt}(F)$, 
a contradiction).
\end{proof}

From the above result, we can deduce the following result that real stability implies simplicity in the category
of real semistable parabolic bundles.

\begin{cor}\label{cor-rs-simple}
Let $(E, \rst{E})$ be a real stable parabolic bundles on $(X, \sigma_X)$. 
\begin{enumerate}
\item If the underlying holomorphic bundle $E_\bullet$ is parabolic stable, then the set of real parabolic endomorphism
of $(E, \rst{E})$ is
$$
(\mathrm{ParEnd}(E_\bullet))^{\rst{E}} = \{\lambda \;|\; \lambda \mathrm{Id}_E \in \R\} \cong_\R \R\,.
$$
\item If $(E, \rst{E})$ is isomorphic to $F_\bullet\oplus (\overline{\sigma_X^*F})_\bullet$ as real parabolic bundle, 
then 
$$
(\mathrm{ParEnd}(E_\bullet))^{\rst{E}} = \{(\lambda, \overline{\lambda})\;|\; \lambda\in \C\} \cong_\R \C.
$$
\end{enumerate}
\end{cor}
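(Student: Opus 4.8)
The plan is to deduce both statements from Proposition \ref{prop-rsp} together with the elementary fact that a parabolic stable bundle is simple, so that its parabolic endomorphism ring is $\C\cdot\mathrm{Id}_E$. The guiding observation is that an endomorphism $u$ of the underlying holomorphic parabolic bundle is a \emph{real} parabolic endomorphism of $(E_\bullet, \rst{E})$ exactly when $u\circ\rst{E} = \rst{E}\circ u$; since $\rst{E}$ is $\C$-antilinear, this commutation relation is precisely the mechanism that forces complex scalars to become real, and it drives both identifications.

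For part (1), I would use stability of $E_\bullet$ to write $\mathrm{ParEnd}(E_\bullet) = \{\lambda\,\mathrm{Id}_E : \lambda\in\C\}$. Setting $u = \lambda\,\mathrm{Id}_E$ and using antilinearity, $u\circ\rst{E}(\eta) = \lambda\,\rst{E}(\eta)$ whereas $\rst{E}\circ u(\eta) = \rst{E}(\lambda\eta) = \overline{\lambda}\,\rst{E}(\eta)$. Hence $u$ is real iff $\lambda = \overline{\lambda}$, i.e. $\lambda\in\R$, which gives $(\mathrm{ParEnd}(E_\bullet))^{\rst{E}} \cong_\R \R$.

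For part (2), Proposition \ref{prop-rsp} lets me write $E_\bullet = F_\bullet\oplus(\sigma_X^*\overline{F})_\bullet$ with $F_\bullet$ stable, and the first point to settle is that $F_\bullet\not\cong(\sigma_X^*\overline{F})_\bullet$. Indeed, were they isomorphic, then by simplicity of $F$ a chosen isomorphism $\psi$ could be normalised so that $\sigma_X^*\overline{\psi}\circ\psi = \pm\mathrm{Id}$, equipping $F$ with a real (resp. quaternionic) structure; the resulting diagonal copy of $F$ inside $(E, \rst{E})$ would then be a proper real parabolic subbundle with $\mathrm{p}\mu = \mathrm{p}\mu(F) = \mathrm{p}\mu(E)$, contradicting real stability. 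Once this is known, both summands are stable of equal parabolic slope and mutually non-isomorphic (note $\sigma_X^*\overline{(\cdot)}$ preserves rank, degree and weights, hence stability), so the off-diagonal parabolic $\mathrm{Hom}$-spaces vanish and $\mathrm{ParEnd}(E_\bullet) = \C\oplus\C$, with $u = (\lambda, \mu)$ acting as $\lambda$ on $F$ and $\mu$ on $\sigma_X^*\overline{F}$.

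Finally I would impose reality. The structure $\rst{E}$ is the antilinear swap $\tilde{\sigma}^+$ (resp. $\tilde{\sigma}^-$) interchanging the two summands through the canonical antilinear identifications $F\leftrightarrow\sigma_X^*\overline{F}$. Tracking $u\circ\rst{E} = \rst{E}\circ u$ on a pair $(a,b)$ and using antilinearity yields $\lambda = \overline{\mu}$ together with $\mu = \overline{\lambda}$, a single condition $\mu = \overline{\lambda}$; the sign distinguishing the quaternionic case cancels and does not affect this, so in both cases $(\mathrm{ParEnd}(E_\bullet))^{\rst{E}} = \{(\lambda,\overline{\lambda}) : \lambda\in\C\} \cong_\R \C$. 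The hard part is the step $F_\bullet\not\cong(\sigma_X^*\overline{F})_\bullet$: it requires carefully producing the diagonal real subbundle and its induced real structure from $\psi$, and this is exactly the antilinear bookkeeping that the reality computation then exploits.
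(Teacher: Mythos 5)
Your treatment of (1) is correct and coincides with the paper's own proof: parabolic stability gives $\mathrm{ParEnd}(E_\bullet)=\C\cdot\mathrm{Id}_E$, and the $\C$-antilinearity of $\rst{E}$ turns the commutation condition $u\circ\rst{E}=\rst{E}\circ u$ into $\lambda=\overline{\lambda}$.

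In (2) there is a genuine gap, and it sits exactly at the step you single out as the hard part: the claim that real stability forces $F_\bullet\not\cong(\sigma_X^*\overline{F})_\bullet$. Let $\psi\colon F_\bullet\ra(\sigma_X^*\overline{F})_\bullet$ be an isomorphism, normalised (using that $F_\bullet$ is stable, hence simple) so that $\sigma_X^*\overline{\psi}\circ\psi=\varepsilon\,\mathrm{Id}_F$ with $\varepsilon=\pm1$. Rescaling $\psi$ by $\lambda\in\C^*$ multiplies this composite by $|\lambda|^2>0$, so the sign $\varepsilon$ is an invariant of $F$, not something you get to choose. When $\varepsilon=+1$ your argument works: the graph $\Gamma_\psi=\{(v,\psi(v))\}$ is invariant under the swap structure $\tilde{\sigma}^+$ and has parabolic slope $\mathrm{p}\mu(E)$, contradicting real stability. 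But when $\varepsilon=-1$ (the quaternionic case for $F$) one computes $\tilde{\sigma}^+(\Gamma_{\lambda\psi})=\Gamma_{\varepsilon\overline{\lambda}^{-1}\psi}$, so invariance of a graph would require $\lambda\overline{\lambda}=\varepsilon=-1$, which is impossible; the two direct factors are interchanged as well. Thus in this case no $\rst{E}$-invariant subbundle of slope $\mathrm{p}\mu(E)$ is produced, real stability is not contradicted, and the non-isomorphism does not follow.

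This cannot be repaired, because the configuration you are trying to exclude genuinely occurs. Take $X=\mathbb{P}^1$ with $\sigma_X(z)=-1/\overline{z}$, trivial parabolic data, and $F=\struct{\mathbb{P}^1}(1)$: then $\sigma_X^*\overline{F}\cong F$ with $\varepsilon=-1$, and $(F\oplus\sigma_X^*\overline{F},\tilde{\sigma}^+)$ is real stable (its only subbundles of slope $1$ are the two factors and the graphs $\Gamma_{\lambda\psi}$, none of them invariant), yet $\mathrm{End}(E)\cong M_2(\C)$ and the subalgebra fixed by the induced involution is the quaternion algebra $\mathbb{H}$, of real dimension $4$, not $\{(\lambda,\overline{\lambda})\}\cong_\R\C$. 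Consequently, your assertions that the off-diagonal $\mathrm{Hom}$-spaces vanish and that $\mathrm{ParEnd}(E_\bullet)=\C\oplus\C$ are unjustified in general; statement (2) has to be read with the (implicit) extra hypothesis $F_\bullet\not\cong(\sigma_X^*\overline{F})_\bullet$, i.e.\ as the complex-type case. This is in effect what the paper does: its proof of (2) is a citation to \cite[p.~9]{Sch12}, where the computation uses $\mathrm{Hom}(F,\sigma_X^*\overline{F})=0$ --- exactly the input your non-isomorphism step was meant to supply. Given that input, your concluding calculation (the involution $(\lambda,\mu)\mapsto(\overline{\mu},\overline{\lambda})$ with fixed set $\{(\lambda,\overline{\lambda})\}\cong_\R\C$, independent of the sign in $\tilde{\sigma}^\pm$) agrees with the paper's; the part you added to make the statement unconditional is the part that fails.
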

\begin{proof}
If $E_\bullet$ is parabolic stable, then it is known that 
$$
\mathrm{ParEnd}(E_\bullet) = \{\lambda \;|\; \lambda \mathrm{Id}_E \in \C\} \cong \C.
$$
The induce real structure on $\mathrm{ParEnd}$ is given by $\lambda \mapsto \overline{\lambda}$, and hence
$$
(\mathrm{ParEnd}(E_\bullet))^{\rst{E}} = \{\lambda \;|\; \lambda \mathrm{Id}_E \in \R\} \cong_\R \R\,.
$$
The proof of (2) follows in the same way as that of \cite[page 9]{Sch12} using the fact that the homothety gives the 
parabolic endomorphism of a stable parabolic bundle.
\end{proof}

\subsection*{Jordan-H\"older filtrations}
In this section, we study Jordan-H\"older filtrations of real (resp. quaternionic) semistable 
parabolic bundles of fixed type $\tau_p$. 

If $E_\bullet$ is a holomorphic semistable parabolic bundle, then there exists a filtration
$$
0 = E_0 \subset E_1 \subset \cdots \subset E_\ell = E
$$
such that for each $i = 1, 2, \dots, \ell$, the parabolic quotient bundle $(E_i/E_{i-1})_\bullet$
is stable with $\mathrm{p}\mu(E_i/E_{i-1}) = \mathrm{p}\mu(E)$. 
Such a filtration is called a Jordan-H\"older filtration of $E_\bullet$, which generally may not be unique. 
However, the associated graded object 
$$
\mathrm{gr}(E_\bullet) := \bigoplus_{i=1}^\ell (E_i/E_{i-1})_\bullet
$$
is unique up to isomorphism. A holomorphic parabolic vector bundle, which is a direct sum of stable
parabolic bundles of the equal parabolic slope, is called a poly-stable parabolic bundle. Note that the associated 
graded  $\mathrm{gr}(E_\bullet)$ is a poly-stable parabolic bundle.

We say that two semistable parabolic bundles $E_\bullet$ and $F_\bullet$ are $S$-equivalent if
the associated graded objects $\mathrm{gr}(E_\bullet)$ and $\mathrm{gr}(F_\bullet)$ are isomorphic
as parabolic bundles. The isomorphism class of an associated graded object of $E_\bullet$ is called the
$S$-equivalence class of $E_\bullet$.

\begin{definition}\label{def-real-polystable-parabolic}\rm{
A real (resp. quaternionic) parabolic bundle $(E, \rst{E})$ on $(X, \sigma_X)$ is 
called real (resp. quaternionic) polystable if there exists real (resp. quaternionic) stable 
parabolic bundles $\{(F_i, \rst{F_i})\}_{i = 1, 2, \dots, k}$ of equal parabolic 
slope such that $\rst{E} = \rst{F_1}\oplus \cdots \oplus \rst{F_k}$ and
$$
E_\bullet \cong \bigoplus_{i=1}^k (F_i)_\bullet\;.
$$
}
\end{definition}

\begin{theorem}\label{thm-RP-abelian}
Let $\catRP$ \rm{(}resp. $\catQP$\rm{)} denote the category of real (resp. quaternionic) semistable parabolic 
bundles on $(X, \sigma_X)$ having fixed parabolic type $\tau_p$. Then, $\catRP$ \rm{(}resp. $\catQP$ \rm{)} is 
an abelian category. Moreover, the simple objects in $\catRP$ 
are precisely the real (resp. quaternionic) stable parabolic bundles having parabolic type $\tau_p$.
\end{theorem}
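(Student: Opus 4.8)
The plan is to realize $\catRP$ (resp. $\catQP$) as the category of \emph{real} (resp. \emph{quaternionic}) objects sitting inside the abelian category $\catP$ of holomorphic semistable parabolic bundles of the common parabolic slope $\mu := \mathrm{par}\mu$ determined by $\tau_p$, and to check that the abelian operations descend to the invariant objects. (Here I read the hypothesis in the usual way: one fixes the parabolic points $S$, the finite set of admissible weights, and the slope $\mu$, while allowing rank and degree to vary, so that the category is closed under sub- and quotient objects.) The first task is to recall that $\catP$ is itself abelian, which is the parabolic analogue of the classical fact that semistable sheaves of a fixed slope form an abelian category. Concretely, for a nonzero morphism $f\colon E_\bullet\ra F_\bullet$ of semistable parabolic bundles of slope $\mu$, the saturated image $\mathrm{Im}(f)$ is at once a parabolic quotient of $E_\bullet$ and a parabolic subbundle of $F_\bullet$, so semistability forces $\mu\le \mathrm{par}\mu(\mathrm{Im}\, f)\le \mu$; additivity of rank and parabolic degree in the defining sequences then yields $\mathrm{par}\mu(\ker f)=\mathrm{par}\mu(\mathrm{coker}\, f)=\mu$, and all three of $\ker f$, $\mathrm{Im}\,f$, $\mathrm{coker}\,f$ (with induced parabolic structures) are again semistable of slope $\mu$. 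The crucial strictness axiom, namely that the canonical map $\mathrm{coim}(f)\ra \mathrm{Im}(f)$ is an isomorphism, holds because it is an injective morphism of parabolic bundles of equal rank and equal parabolic degree, hence a parabolic isomorphism.

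Next I would record that $\catRP$ (resp. $\catQP$) is $\R$-linear and additive. The morphism set $\mathrm{Hom}\big((E_\bullet,\rst{E}),(F_\bullet,\rst{F})\big)$ is the real subspace of parabolic morphisms $f$ with $f\circ\rst{E}=\rst{F}\circ f$, which is an $\R$-vector space; the zero bundle is a zero object; and the direct sum of two real (resp. quaternionic) semistable parabolic bundles of slope $\mu$, equipped with $\rst{E}\oplus\rst{F}$, is again real (resp. quaternionic) semistable of slope $\mu$, since a direct sum of semistable parabolic bundles of equal slope is semistable and a direct sum of anti-linear (anti-)involutions is again one of the same type. By Proposition \ref{prop-rsp-hsp} the forgetful functor indeed lands in $\catP$, so these structures are compatible with the ambient ones.

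The heart of the argument is that kernels, images, and cokernels descend. Write $\Theta(E_\bullet)=\sigma_X^*\overline{E}_\bullet$ for the anti-linear, involutive functor on $\catP$, so that a real (resp. quaternionic) structure is an isomorphism $\phi\colon E_\bullet\xrightarrow{\sim}\Theta(E_\bullet)$ with $\Theta(\phi)\circ\phi=\pm\mathrm{Id}$. For a real morphism $f$, the subobjects $\ker f$ and $\mathrm{Im}\,f$ and the quotient $\mathrm{coker}\,f$ formed in $\catP$ are $\rst{E}$-invariant, hence $\phi$ restricts/descends to endow each with a real (resp. quaternionic) structure; the only bookkeeping is to check that the sign $\pm$ in $\Theta(\phi)\circ\phi=\pm\mathrm{Id}$ is inherited, so that kernels of quaternionic morphisms stay quaternionic and so on. Since every monomorphism (resp. epimorphism) in the abelian category $\catP$ is the kernel of its cokernel (resp. cokernel of its kernel), and since all the comparison maps — in particular $\mathrm{coim}(f)\xrightarrow{\sim}\mathrm{Im}(f)$ — are themselves real morphisms, the same normality holds in $\catRP$ (resp. $\catQP$). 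This establishes the abelian structure.

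Finally, for the identification of simple objects, the key observation is that a subobject of $(E_\bullet,\rst{E})$ in $\catRP$ is exactly a saturated $\rst{E}$-invariant parabolic subbundle of slope $\mu$, i.e. a real parabolic subbundle of the same slope. If $(E_\bullet,\rst{E})$ is real stable then, by definition, $\mathrm{par}\mu(F)<\mu$ for every proper real parabolic subbundle $F$, so there is no proper nonzero subobject and the object is simple. Conversely, if it is not stable, Proposition \ref{prop-rsp-hsp} gives a proper real subbundle $F$ with $\mathrm{par}\mu(F)=\mu$; its saturation is again $\rst{E}$-invariant of slope $\mu$ and of the same (proper) rank, hence a proper nonzero subobject, so the object is not simple. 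Thus the simple objects are precisely the real (resp. quaternionic) stable parabolic bundles, consistent with Corollary \ref{cor-rs-simple}, which computes the endomorphism algebra of such an object to be $\R$ or $\C$ (a division $\R$-algebra, as Schur's lemma predicts). The main obstacle I anticipate is twofold: carrying out the parabolic-degree bookkeeping that underlies the strictness isomorphism $\mathrm{coim}\cong\mathrm{Im}$ (where saturation and the induced weights must be handled carefully), and tracking the involution sign through these constructions in the quaternionic case so that the descended structures are genuinely quaternionic.
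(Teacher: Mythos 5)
Your proposal is correct and takes essentially the same approach as the paper: both realize $\catRP$ (resp. $\catQP$) inside the abelian category $\catP$ via Proposition \ref{prop-rsp-hsp}, check that kernels and images of real morphisms are invariant under the real structures so that the abelian operations descend, and identify the simple objects with the real (resp. quaternionic) stable bundles directly from the definition of real (semi)stability. The only difference is one of detail, not of route: you additionally prove that $\catP$ is abelian (including the strictness isomorphism $\mathrm{coim}\cong\mathrm{Im}$) and make explicit that ``fixed parabolic type'' must be read as fixed marked points, weights and slope with rank allowed to vary, both of which the paper simply takes as known.
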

\begin{proof}
Let $\catP$ be the category of semistable holomorphic parabolic bundles on $X$ having fixed parabolic type $\tau_p$.
By Proposition \ref{prop-rsp-hsp}, the category $\catRP$ is a strict subcategory of $\catP$. Since $\catP$
is an abelian category, we only need to check that if 
$\varphi \colon (E, \rst{E})\ra (F, \rst{F})$
is morphism in $\catRP$, then $\mathrm{Ker}(\varphi)$ and $\mathrm{Im}(\varphi)$ are real vector bundles. 
Since $\varphi$ is a morphism of real vector bundles, we have 
$\varphi \circ \rst{E} = \rst{F}\circ \varphi$. 
From this, it follows that $\mathrm{Ker}(\varphi)$ is $\rst{E}$-invariant and $\mathrm{Im}(\varphi)$
is $\rst{F}$-invariant.

Let $(E, \rst{E})$ be a real stable parabolic bundle having parabolic type $\tau_p$. If  
$(E, \rst{E})$ admit a non-trivial subobject, say $(E', \rst{E'})$ in $\catRP$,
then it gives a contradiction to the fact that $(E, \rst{E})$ be a real stable parabolic bundle.
Hence, $(E, \rst{E})$ does not admit a non-trivial subobject in $\catRP$. This implies that 
$(E, \rst{E})$ is a simple object in $\catRP$. Conversely, if $(E, \rst{E})$
is a simple object $\catRP$, then for any non-trivial real subbundle $F$ of $E$, we have
$\mathrm{p}\mu(F) < \mathrm{p}\mu(E)$. This completes the proof.
\end{proof}

\begin{definition}\rm{
Let $(E, \rst{E})$ be a real semistable parabolic bundles on $(X, \sigma_X)$. 
By a \emph{real (resp. quaternionic) parabolic Jordan-H\"older filtration} of $(E, \rst{E})$, we 
mean a filtration
$$
0 = E_0 \subset E_1 \subset \cdots \subset E_\ell = E
$$
by $\sigma^E$-invariant subbundles of $E$ such that for each $i = 1, 2, \dots, \ell$, the 
quotient real (resp. quaternionic) parabolic bundle $(E_i/E_{i-1}, \tilde{\sigma}_i)$
is real (resp. quaternionic) stable with $\mathrm{p}\mu(E_i/E_{i-1}) = \mathrm{p}\mu(E)$. 
}
\end{definition}

\begin{proposition}\label{real-JH-filtration}
Every real (resp. quaternionic) semistable parabolic bundle $(E, \rst{E})$ admits a real 
Jordan-H\"older filtration.
\end{proposition}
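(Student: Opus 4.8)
The plan is to induct on $\mathrm{rank}(E)$, producing the filtration one stratum at a time while keeping the real (resp.\ quaternionic) structure invariant throughout. If $(E,\rst{E})$ is already real (resp.\ quaternionic) stable, then $0 = E_0 \subset E_1 = E$ is the desired filtration and there is nothing to prove; this also settles the base case of the induction. So assume $(E,\rst{E})$ is real semistable but not stable, and set $\mu := \mathrm{p}\mu(E)$.

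First I would produce the bottom stratum $E_1$. Among all nonzero proper $\rst{E}$-invariant subbundles $F \subset E$ (each equipped with its induced parabolic structure) satisfying $\mathrm{p}\mu(F) = \mu$, I would choose one, $E_1$, of minimal rank. Such an $F$ exists: since $(E,\rst{E})$ is not real stable, the strict inequality in the definition of stability fails for some proper real subbundle, which then has $\mathrm{p}\mu \geq \mu$, and semistability forces $\mathrm{p}\mu = \mu$. I then claim $(E_1, \rst{E}|_{E_1})$ is real stable: if it admitted a nonzero proper $\rst{E}$-invariant subbundle $G$ with $\mathrm{p}\mu(G) \geq \mathrm{p}\mu(E_1) = \mu$, then $G$ would be a real subbundle of $E$, whence $\mathrm{p}\mu(G) \leq \mu$ by semistability of $E$, forcing $\mathrm{p}\mu(G) = \mu$ with $\mathrm{rank}(G) < \mathrm{rank}(E_1)$ — contradicting minimality. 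Here I rely on the fact that the parabolic structure induced on $G$ from $E_1$ agrees with the one induced directly from $E$.

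Next I would pass to the quotient. Since $E_1$ is $\rst{E}$-invariant, $E/E_1$ inherits a real (resp.\ quaternionic) structure $\tilde\sigma_1$, and additivity of parabolic degree and rank in $0 \to (E_1)_\bullet \to E_\bullet \to (E/E_1)_\bullet \to 0$ together with $\mathrm{p}\mu(E_1) = \mu$ gives $\mathrm{p}\mu(E/E_1) = \mu$. To see $(E/E_1, \tilde\sigma_1)$ is again real semistable, take any $\tilde\sigma_1$-invariant $\bar G \subset E/E_1$; its preimage $G \subset E$ is $\rst{E}$-invariant and contains $E_1$, so $\mathrm{p}\mu(G) \leq \mu$, and the same additivity (using $\mathrm{p}\mu(E_1)=\mu$) yields $\mathrm{p}\mu(\bar G) \leq \mu$. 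As $\mathrm{rank}(E/E_1) < \mathrm{rank}(E)$, the induction hypothesis supplies a real Jordan-H\"older filtration $0 = \bar E_0 \subset \cdots \subset \bar E_m = E/E_1$ by $\tilde\sigma_1$-invariant subbundles with real stable quotients of slope $\mu$. Pulling this back along the projection $\pi\colon E \to E/E_1$ and prepending $E_1$, i.e.\ setting $E_{j+1} := \pi^{-1}(\bar E_j)$, yields $\rst{E}$-invariant subbundles with $E_{j+1}/E_j \cong \bar E_j/\bar E_{j-1}$ real stable of slope $\mu$; together with the real stability of $E_1 = E_1/E_0$ this is the required filtration.

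The routine part is the slope bookkeeping; the point that needs the most care is the compatibility and additivity of the induced and quotient parabolic structures in $0 \to (E_1)_\bullet \to E_\bullet \to (E/E_1)_\bullet \to 0$, which underlies both the stability of $E_1$ and the semistability of $E/E_1$. This is the parabolic analogue of additivity of degree, and I would invoke it as the standard fact from \cite{MS80}. The remaining subtlety is purely structural: preimages and quotients of $\rst{E}$-invariant subbundles are again invariant (so the filtration is genuinely real), and in the quaternionic case the restricted and induced anti-involutions still square to $-\mathrm{Id}$; both are immediate because $\pi$ intertwines the structures. The abelian-category framework of Theorem \ref{thm-RP-abelian} provides the conceptual backdrop — the quotients are exactly the simple objects, by the characterization of simple objects as real stable bundles — but the explicit inductive construction above is the most transparent route and does not require fixing the type $\tau_p$ on the subquotients.
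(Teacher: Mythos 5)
Your proof is correct, but it takes a genuinely different route from the paper's. The paper's own proof is a one-line categorical argument: having shown in Theorem \ref{thm-RP-abelian} that $\catRP$ (resp.\ $\catQP$) is an abelian category whose simple objects are precisely the real (resp.\ quaternionic) stable parabolic bundles, it observes that this category is Noetherian and Artinian, so the abstract Jordan--H\"older theorem applies and produces the filtration. You instead build the filtration by hand: induct on rank, take for $E_1$ a minimal-rank $\rst{E}$-invariant subbundle of slope $\mathrm{p}\mu(E)$ (which exists exactly because $E$ is real semistable but not real stable), deduce its real stability from minimality, and pass to the quotient $E/E_1$ with its induced real (resp.\ quaternionic) structure. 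Your argument is self-contained and makes explicit the finiteness that the categorical proof uses silently --- your induction on rank is precisely the reason the category is Noetherian and Artinian, since strictly increasing chains of subobjects have strictly increasing ranks bounded by $\mathrm{rank}(E)$ --- at the cost of invoking the standard compatibility facts for parabolic structures (transitivity of induced structures, additivity of parabolic degree in short exact sequences, and agreement of sub-quotient structures), which the categorical approach has already packaged into the statement that $\catRP$ is abelian. Two further points of comparison: the categorical route yields in the same breath the uniqueness up to isomorphism of the associated graded object, which the paper needs in Corollary \ref{cor--real-S-equiv}, whereas your construction gives existence only (sufficient for the proposition as stated); and your closing remark that the subquotients need not have parabolic type $\tau_p$ quietly sidesteps a slight looseness in the paper's formulation, since kernels and images of morphisms between objects of type $\tau_p$ generally have a different type, so the category is really one of semistable parabolic bundles of fixed slope rather than of fixed type.
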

\begin{proof}
Since the category $\catRP$ is an abelian, Noetherian, and Artinian, the Jordan-H\"older
theorem holds in $\catRP$.
\end{proof}

\begin{cor}\label{cor--real-S-equiv}
The holomorphic $S$-equivalence class of a real (resp. quaternionic) semistable parabolic bundle 
$(E, \rst{E})$ contains a real (resp. quaternionic) polystable parabolic bundle. 
Any two such objects are isomorphic as real (resp. quaternionic) polystable parabolic bundles.
\end{cor}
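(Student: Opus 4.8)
The plan is to realize the required representative as the associated graded of a real Jordan--H\"older filtration, and then to deduce uniqueness from the Krull--Schmidt property of $\catRP$ together with the structure theorem for real stable bundles (Proposition \ref{prop-rsp}).

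First I would establish existence. By Proposition \ref{real-JH-filtration}, $(E, \rst{E})$ admits a real Jordan--H\"older filtration $0 = E_0 \subset E_1 \subset \cdots \subset E_\ell = E$ by $\rst{E}$-invariant subbundles whose quotients $(E_i/E_{i-1}, \tilde{\sigma}_i)$ are real stable of parabolic slope $\mathrm{p}\mu(E)$. By construction the real bundle
$$
\mathrm{gr}_\sigma(E, \rst{E}) := \bigoplus_{i=1}^\ell (E_i/E_{i-1}, \tilde{\sigma}_i)
$$
is real polystable. To place it in the holomorphic $S$-equivalence class of $E$, I forget the real structure: the filtration becomes a filtration of $E_\bullet$ by holomorphic parabolic subbundles with semistable quotients of parabolic slope $\mathrm{p}\mu(E)$ (Proposition \ref{prop-rsp-hsp}), so refining each quotient by a holomorphic Jordan--H\"older filtration gives $\mathrm{gr}(E_\bullet) \cong \bigoplus_i \mathrm{gr}\big((E_i/E_{i-1})_\bullet\big)$. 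By Proposition \ref{prop-rsp}, each real stable quotient is holomorphically polystable --- either stable, or of the form $K \oplus \sigma_X^*\overline{K}$ with $K$ stable --- hence equals its own associated graded. Thus $\mathrm{gr}(E_\bullet) \cong \big(\mathrm{gr}_\sigma(E, \rst{E})\big)_\bullet$, and the real polystable bundle $\mathrm{gr}_\sigma(E, \rst{E})$ is holomorphically $S$-equivalent to $(E, \rst{E})$.

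Next I would prove uniqueness. Let $(G, \sigma_G)$ and $(G', \sigma_{G'})$ be real polystable bundles in the holomorphic $S$-equivalence class of $E$. A polystable parabolic bundle is its own associated graded, so $G_\bullet \cong \mathrm{gr}(E_\bullet) \cong G'_\bullet$ as holomorphic parabolic bundles; fixing such an isomorphism transports $\sigma_{G'}$ to a second real structure on $G_\bullet$, and the task reduces to showing that any two real structures making the fixed holomorphic polystable bundle $\mathrm{gr}(E_\bullet)$ real polystable are conjugate under $\mathrm{ParAut}(\mathrm{gr}(E_\bullet))$. I would do this by decomposing each side into real stable summands and matching them. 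The Krull--Schmidt theorem for holomorphic polystable parabolic bundles shows that $G$ and $G'$ have the same multiset of stable holomorphic constituents; grouping these into $\sigma_X$-conjugation orbits and applying Proposition \ref{prop-rsp} with Corollary \ref{cor-rs-simple}, each real stable summand is forced by the holomorphic data. Indeed a non-self-conjugate stable $K$ can only occur paired with $\sigma_X^*\overline{K}$ inside a single real stable summand $(K \oplus \sigma_X^*\overline{K}, \tilde{\sigma})$, while a self-conjugate stable constituent carries a real structure that is unique up to isomorphism. Hence the real stable decompositions of $(G, \sigma_G)$ and $(G', \sigma_{G'})$ agree term by term, giving a real isomorphism $(G, \sigma_G) \cong (G', \sigma_{G'})$. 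The quaternionic case is strictly analogous, replacing real type by quaternionic type throughout.

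The hard part will be the uniqueness step, and in particular the bookkeeping for self-conjugate stable constituents. What makes it go through is that a real structure on a fixed holomorphic stable parabolic bundle is unique up to isomorphism: this is the computation behind Corollary \ref{cor-rs-simple}, where $\mathrm{ParAut}$ of a stable bundle is $\C^\times$ and the set of real structures is a trivial torsor, trivialized by $\mu \mapsto \mu/\overline{\mu}$. Equivalently, this is the vanishing of the relevant nonabelian Galois cohomology governing real forms, so that once the holomorphic polystable bundle is fixed no genuine choice of real structure remains. The only delicate point is to ensure that self-conjugate constituents of quaternionic type are excluded from the real decomposition (and dually in the quaternionic case), which is precisely enforced by the dichotomy of Proposition \ref{prop-rsp}.
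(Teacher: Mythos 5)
Your proposal is correct and follows essentially the same route as the paper: existence via the real Jordan--H\"older filtration (Proposition \ref{real-JH-filtration}) combined with the dichotomy of Proposition \ref{prop-rsp}, and uniqueness by reducing to real stable summands, where a self-conjugate (holomorphically stable) constituent carries a unique real structure up to conjugation and a non-self-conjugate constituent $K$ must pair with $\sigma_X^*\overline{K}$. The paper organizes the uniqueness step as an induction over these same two cases (citing the argument of Schaffhauser's Proposition 2.8 for the stable case) rather than invoking Krull--Schmidt explicitly, but the mathematical content is identical.
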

\begin{proof}
The first assertion follows from Propositions \ref{real-JH-filtration} and \ref{prop-rsp} 
(see Definition \ref{def-real-polystable-parabolic}). For the second assertion, it is enough to 
show that two real (resp. quaternionic) stable parabolic bundles $(E_1, \rst{E_1})$ 
and $(E_2, \rst{E_2})$ such that $(E_1)_\bullet \cong (E_2)_\bullet$ 
as holomorphic parabolic bundles are, in fact, isomorphic as real (resp. quaternionic) parabolic bundles. 
By Proposition \ref{prop-rsp}, we need to consider the following two cases to conclude the argument 
using induction.

\noindent \textbf{Case-1:} Suppose that $(E_1)_\bullet$ and $(E_2)_\bullet$ are 
stable holomorphic parabolic bundles.

Let $\varphi\colon (E_1)_\bullet \ra (E_2)_\bullet$ be an isomorphism of holomorphic 
parabolic bundle. By following the similar arguments as in \cite[Proposition 2.8]{Sch12}, we can 
conclude that $(E_1, \rst{E_1})$ and $(E_2, \rst{E_2})$ are 
isomorphic as real (resp. quaternionic) parabolic bundles.

\noindent \textbf{Case-2:} Suppose that 
$(E_1, \rst{E_1})\cong  (F_1)_\bullet\oplus (\sigma_X^*\overline{F_1})_\bullet$ and
$(E_2, \rst{E_2})\cong  (F_2)_\bullet\oplus (\sigma_X^*\overline{F_2})_\bullet$, where
$(F_i)_\bullet$ is stable holomorphic bundle, $i = 1, 2$.

Since $(E_1)_\bullet$ and $(E_2)_\bullet$ are isomorphic as holomorphic polystable 
parabolic bundles, it follows that either $(F_1)_\bullet\cong (F_2)_\bullet$ or 
$(F_1)_\bullet\cong (\sigma_X^*\overline{F_2})_\bullet$. From this, it follows that 
the isomorphism $\varphi\colon (E_1)_\bullet \ra (E_2)_\bullet$ of holomorphic 
parabolic bundles is an isomorphism of real (resp. quaternionic) parabolic bundles.
\end{proof}

\section{Gauge theoretic approach to parabolic bundles over Klein surfaces}\label{sec-4}
In this section, we study the induced real structure on the space of Sobolev connections and the 
gauge group, which respect the parabolic structure on the fixed smooth real (resp. quaternionic)
parabolic bundle $(E_\bullet, \sigma^E)$.

\subsection{Real structure on the space of Sobolev connections}
Now let us fix a real (resp. quaternionic) smooth bundle $(E, \sigma^E)$ on $(X, \sigma_X)$ with 
a real parabolic structure over $S$, where $S$ is a finite subset of  $X$ such that $\sigma_X(S) = S$. 
Let $\phi\colon \sigma_X^*\overline{E}\ra E$ be the isomorphism, determined by the real (resp. quaternionic) 
structure of $E$, such that $\sigma_X^*\overline{\phi}\circ \phi = 
\mathrm{Id}_E~(\mathrm{resp.} - \mathrm{Id}_E)$.
Note that $A^s(E)$ and $A^{q, s}(E)$ have induced real structure from the real structure on $E$, which we shall denote by simply $\tilde{\sigma}$ and the induced isormorphism by $\tilde{\phi}$.
For $\parE \in \mathcal{C}$, we define $\alpha_\sigma(\parE)\colon A^0(E)\ra A^{0,1}(E)$
as follows: For any $s\in A^0(E)$
$$
\alpha_\sigma(\parE)(s)= \tilde{\phi}(\overline{\partial}_{\sigma_X^*\overline{E}}(\phi^{-1}s))\,.
$$
It is clear that $\alpha_\sigma^2 = \mathrm{Id}_\mathcal{C}$. There is also an involution $\gamma_\sigma\colon \mathcal{G}_{\mathrm{par}} \ra \mathcal{G}_{\mathrm{par}}$ given by 
$
g\mapsto \phi(\sigma_X^*\bar{g})\phi^{-1}\;.
$
As usual, $\mathcal{G}_{\mathrm{par}}$ acts on $\mathcal{C}$ as $g\cdot \parE := g(\parE g^{-1})$. 

\begin{lemma}
For $g\in \mathcal{G}_{\mathrm{par}}$ and $\parE \in \mathcal{C}$, we have
$
\alpha_\sigma(g\cdot \parE) = \gamma_\sigma(g)\cdot \alpha_\sigma(\parE).
$
\end{lemma}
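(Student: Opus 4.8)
The plan is to reduce the identity to a single structural fact about the assignment $c$ that sends a Dolbeault operator $\parE$ on $E$ to the induced operator $\bar\partial_{\sigma_X^*\overline{E}}$ on $\sigma_X^*\overline{E}$, namely that $c$ intertwines the gauge action. Both $\alpha_\sigma$ and $\gamma_\sigma$ are, after stripping off the fixed isomorphism $\phi$, built from $c$ together with the assignment $g\mapsto\sigma_X^*\bar g$; so once the intertwining is in place the two copies of $\phi$ will cancel and the equality drops out. Concretely, I would first record the operator descriptions $\alpha_\sigma(\parE)=\tilde\phi\circ\bar\partial_{\sigma_X^*\overline{E}}\circ\phi^{-1}$ at the level of maps $A^0(E)\to A^{0,1}(E)$, where $\phi^{-1}$ is the induced map in degree zero and $\tilde\phi$ the induced map in degree $(0,1)$, and $\gamma_\sigma(g)=\phi(\sigma_X^*\bar g)\phi^{-1}$ as a bundle automorphism acting in each form degree. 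Recall also that $g\cdot\parE=g\circ\parE\circ g^{-1}$ and likewise $\gamma_\sigma(g)\cdot\alpha_\sigma(\parE)=\gamma_\sigma(g)\circ\alpha_\sigma(\parE)\circ\gamma_\sigma(g)^{-1}$.

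The key lemma to establish is $c(g\cdot\parE)=(\sigma_X^*\bar g)\cdot c(\parE)$. For this I would argue that $c=\sigma_X^*\circ\overline{(\cdot)}$ preserves composition order: conjugation $\overline{(\cdot)}$ is $\C$-antilinear on scalars but does not reverse composition, since $\overline{S\circ T}=\overline{S}\circ\overline{T}$ from $\overline{T}(\bar v)=\overline{Tv}$, and pullback along $\sigma_X$ is functorial, so $c(S\circ T)=c(S)\circ c(T)$. Applying this to $g\circ\parE\circ g^{-1}$, together with $c(g)=\sigma_X^*\bar g$ and $c(g^{-1})=c(g)^{-1}$, yields the claim. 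Along the way one must note that the anti-holomorphy of $\sigma_X$ combined with the conjugation sends the type $(0,1)$ to $(1,0)$ and back, so that $c(\parE)$ is genuinely of type $(0,1)$ and $\bar\partial_{\sigma_X^*\overline{E}}$ really is a Dolbeault operator. I expect this step—tracking the antilinearity and the type change under $\sigma_X^*$ while verifying that composition is preserved rather than reversed—to be the main obstacle.

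Finally I would substitute and collapse. Writing the right-hand side as $\phi(\sigma_X^*\bar g)\phi^{-1}\circ\tilde\phi\,c(\parE)\,\phi^{-1}\circ\phi(\sigma_X^*\bar g)^{-1}\phi^{-1}$ and cancelling the adjacent pairs $\phi^{-1}\phi=\mathrm{Id}$ (in degree zero) and $\phi^{-1}\tilde\phi=\mathrm{Id}$ (in degree $(0,1)$, valid because $\tilde\phi$ is the induced map of $\phi$ in that degree) reduces it to $\tilde\phi\big((\sigma_X^*\bar g)\cdot c(\parE)\big)\phi^{-1}$; the key lemma then rewrites $(\sigma_X^*\bar g)\cdot c(\parE)=c(g\cdot\parE)$, so the expression becomes $\tilde\phi\circ c(g\cdot\parE)\circ\phi^{-1}=\alpha_\sigma(g\cdot\parE)$, as desired. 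I would close by remarking that the argument uses only that $\phi$ is invertible and never the normalization sign in $\sigma_X^*\overline{\phi}\circ\phi=\pm\mathrm{Id}_E$, so it applies verbatim in both the real and quaternionic cases.
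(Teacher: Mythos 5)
Your proof is correct and takes essentially the same route as the paper's: the paper's first displayed line is exactly your key lemma (that forming the induced operator on $\sigma_X^*\overline{E}$ intertwines the actions of $g$ and $\sigma_X^*\bar g$), and its remaining lines are your substitute-and-collapse step of inserting $\tilde\phi^{-1}\tilde\phi$ and $\phi^{-1}\phi$ to recognize $\gamma_\sigma(g)\cdot\alpha_\sigma(\parE)$. The only difference is that you justify the intertwining explicitly via functoriality of conjugation and pullback, a step the paper asserts without comment.
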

\begin{proof}
Let $g\in \mathcal{G}_{\mathrm{par}}$ and $\parE \in \mathcal{C}$. Then,
\begin{align*}
\alpha_\sigma(g.\overline{\partial}_E)
& =\tilde{\phi} \circ \sigma_X^*\overline{g}(\overline{\partial}_{\sigma^*_X \overline{E}} \sigma_X^*\overline{g}^{-1})\circ\phi^{-1}\\
& =(\tilde{\phi}\circ \sigma_X^*\overline{g} \circ \tilde{\phi}^{-1})\circ \tilde{\phi} \circ (\overline{\partial}_{\sigma^*_X\overline{E}}(\phi^{-1}(\phi\circ \sigma_X^*\overline{g}^{-1}\circ \phi^{-1})))\\
& =\gamma_\sigma(g)(\alpha_\sigma(\overline{\partial}_E)\gamma_\sigma(g)^{-1})\\
& =\gamma_\sigma(g).\alpha_\sigma(\overline{\partial}_E)
\end{align*}
\end{proof}

Let $\mathcal{C}^{\alpha_\sigma} = \{\parE \in \mathcal{C}\;|\; \alpha_\sigma(\parE) = \parE\}$ and
$\mathcal{G}_{\mathrm{par}}^{\gamma_\sigma} = \{g\in \mathcal{G}_{\mathrm{par}}\;|\; \gamma_\sigma(g) = g\}$. Then, 
the subgroup $\mathcal{G}_{\mathrm{par}}^{\gamma_\sigma}$ acts on the space $\mathcal{C}^{\alpha_\sigma}$.
The orbit space
$\mathcal{C}^{\alpha_\sigma}/\mathcal{G}_{\mathrm{par}}^{\gamma_\sigma}$ is in bijection with the set of 
isomorphism classes of real (resp. quaternionic) parabolic bundles whose underlying smooth real 
(resp. quaternionic) parabolic bundles are smoothly isomorphic to $(E_\bullet, \sigma^E)$.

Let us fix an adapted Hermitian metric $h$ on $(E_\bullet, \sigma^E)$. For $D \in \mathcal{A}^p$, 
we define $\alpha_\sigma(D)$ as follows:
$$
d_{\alpha_\sigma(D)}:=\tilde{\phi}\circ d_{\sigma_X^*\overline{D}}\circ \phi^{-1}  \;,
$$
where $\sigma_X^*\overline{D}$ is the induced connection on $\sigma_X^*\overline{E}$ and 
$\tilde{\phi}:L^p_1(A^1(\sigma_X^*\overline{E}))\ra L^p_1(A^1(E))$ is isomorphism induced by the
real structure on $T^*X\otimes E$.

Also, the space $\mathcal{A}^p$ is an affine space with the group of translations 
$L^p_1(A^1(\mathfrak{u}(E)))$. Since $\mathfrak{u}(E)$ is compact Lie algebra, it admits and 
Ad-invariant non-degenerate symmetric bilinear form 
$\langle\;,\;\rangle:\mathfrak{u}(E)\times \mathfrak{u}(E)\rightarrow \mathbb{R}$ and the wedge 
product $\wedge:A^1\times A^1\rightarrow A^2$ is skew-symmetric. By composing the two maps, we have 
a skew-symmetric bilinear form $\omega$ given by
$$
L^p_1(A^1(\mathfrak{u}(E)))\times L^p_1(A^1(\mathfrak{u}(E)))\rightarrow L^p_1(A^2(\mathfrak{u}(E)\otimes \mathfrak{u}(E)))\rightarrow \mathbb{R}
$$
$$ (a,b)\mapsto a\wedge b \mapsto \displaystyle\int_X\langle a\wedge b\rangle
$$
which is non-degenerate. This skew-symmetric, non-degenerate, bilinear form $\omega$ is called 
Atiyah-Bott symplectic form.

\begin{proposition} Let $p\in (1, 2)$ be such that the condition \eqref{eq-1} holds. For $D \in \mathcal{A}^p$, 
we have $\alpha_\sigma(D) \in \mathcal{A}^p$. The map $\alpha_\sigma:\mathcal{A}^p\rightarrow \mathcal{A}^p$ 
given by, $A\mapsto \alpha_\sigma(D) = \tilde{\phi}\sigma_X^* \overline{A}\phi^{-1}$ is an anti-symplectic isometric involution.
\end{proposition}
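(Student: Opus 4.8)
I want to establish three things about the map $\alpha_\sigma\colon \mathcal{A}^p \to \mathcal{A}^p$, $A \mapsto \alpha_\sigma(D)$: that it lands in $\mathcal{A}^p$ (i.e.\ preserves the singular behaviour and Sobolev class of unitary connections), that it is an involution that is isometric for the Sobolev metric, and that it reverses the sign of the Atiyah--Bott symplectic form $\omega$. The overall strategy mirrors the lemma just proved for $\alpha_\sigma$ on $\mathcal{C}$: unwind the definition $d_{\alpha_\sigma(D)} = \tilde{\phi}\circ d_{\sigma_X^*\overline{D}}\circ \phi^{-1}$ and use that $\phi$, $\tilde{\phi}$ are the isomorphisms coming from the real structure with $\sigma_X^*\overline{\phi}\circ\phi = \pm\,\mathrm{Id}_E$.

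\emph{Well-definedness.} First I would check that $\alpha_\sigma(D)$ is again an $h$-unitary Sobolev connection of the prescribed type. Unitarity is immediate because $\tilde{\phi}$ and $\phi^{-1}$ are induced by the real structure, which is compatible with the adapted metric $h$ fixed on $(E_\bullet,\sigma^E)$, so conjugating a unitary connection by these maps stays unitary. For the local model near a parabolic point $x \in S$, I would write $D = d + \alpha\frac{dz}{z} + a$ with $a \in \mathcal{D}^p_1$ in an adapted frame, pass to the conjugate bundle where $z \mapsto \overline{z}$ under $\sigma_X$, and observe that the diagonal weight term $\alpha\frac{dz}{z}$ is preserved in form (the weights over $x$ and $\sigma_X(x)$ agree by Definition \ref{def-par-real-quat}), while the remainder is carried into $\mathcal{D}^p_1$ because $\tilde{\phi},\phi$ are smooth isomorphisms respecting the flag filtrations. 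This is where I expect the real bookkeeping to live.

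\emph{Involution and isometry.} That $\alpha_\sigma^2 = \mathrm{Id}$ follows formally from $\sigma_X^*\overline{\phi}\circ\phi = \pm\mathrm{Id}_E$ exactly as in the computation for $\mathcal{C}$: applying $\alpha_\sigma$ twice conjugates by $\tilde{\phi}\,\sigma_X^*\overline{\tilde{\phi}}$ and $\phi^{-1}\sigma_X^*\overline{\phi}^{-1}$, and the sign $(\pm 1)^2 = 1$ cancels in the quaternionic case too, so the square is the identity on $\mathcal{A}^p$. For isometry I would note that the translation group is $L^p_1(A^1(\mathfrak{u}(E)))$ and that $\alpha_\sigma$ acts on tangent vectors by $a \mapsto \tilde{\phi}\,\sigma_X^*\overline{a}\,\phi^{-1}$; since $\sigma_X$ is an isometry of $X$ (it is anti-holomorphic for the compatible metric) and complex conjugation together with $\tilde{\phi},\phi$ preserves the fibrewise Hermitian norm, the weighted Sobolev norm $\|\cdot\|_{W^p_k}$ (and $\|\cdot\|_{L^p_k}$ on the diagonal part) is preserved term by term.

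\emph{Anti-symplectic.} The crux is the sign on $\omega$. I would compute $\omega(\alpha_\sigma a, \alpha_\sigma b) = \int_X \langle \tilde\sigma(a)\wedge\tilde\sigma(b)\rangle$ and track two sources of signs: the complex conjugation in $\tilde\sigma$ acting on the $1$-forms, and the pullback by the orientation-reversing diffeomorphism $\sigma_X$. Because $\sigma_X$ is anti-holomorphic it reverses the orientation of $X$, contributing a factor $-1$ to $\int_X$; combined with the fact that conjugation is an isometry for the Ad-invariant form $\langle\,,\,\rangle$ on $\mathfrak{u}(E)$, this yields $\omega(\alpha_\sigma a, \alpha_\sigma b) = -\omega(a,b)$. \textbf{The main obstacle} I anticipate is assembling this sign correctly: one must verify that the reality of $\langle\,,\,\rangle$ and the wedge pairing contribute no extra sign, so that the single orientation-reversal of $\sigma_X^*$ is the only surviving factor. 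I would isolate this by checking it first on smooth compactly supported forms away from $S$, where the integral is honest, and then extend by density of $A^1$ in $L^p_1(A^1(\mathfrak{u}(E)))$ and continuity of $\omega$.
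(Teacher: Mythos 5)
Your overall architecture matches the paper's proof --- a local computation in adapted frames near $S$ for well-definedness, a formal conjugation identity for the involution, and the orientation reversal of $\sigma_X$ together with reality of $\langle\,,\,\rangle$ on anti-Hermitian matrices for the anti-symplectic sign --- but your well-definedness step has a genuine gap, and it sits exactly where the hypothesis on $p$ must be used. Your justification that the remainder term ``is carried into $\mathcal{D}^p_1$ because $\tilde{\phi},\phi$ are smooth isomorphisms respecting the flag filtrations'' is not sound: membership in $\mathcal{A}^p$ is tested in the \emph{adapted} frames $(e_i/|z|^{\alpha_i})$, which are themselves singular at the parabolic points. Even though $\phi$ is smooth and flag-respecting in an ordinary frame, its matrix in adapted frames is $|z|^{\alpha_i-\alpha_j}(\phi)^i_j$. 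The flag condition gives $(\phi)^i_j=0$ when $\alpha_i<\alpha_j$, so these entries are bounded, but their first and second derivatives behave like $|z|^{\alpha_i-\alpha_j-1}$ and $|z|^{\alpha_i-\alpha_j-2}$, and the claim that this matrix lies in $\mathcal{D}^p_2(\mathrm{End}\,E)$ --- hence that conjugation by $\phi$, $\tilde{\phi}$ preserves the class $\mathcal{D}^p_1$ of the connection's non-singular part --- holds precisely because $p$ satisfies \eqref{eq-1} (e.g.\ $|z|^{\alpha_i-\alpha_j-2}\in L^p$ near $z=0$ forces $p<\tfrac{2}{2+\alpha_j-\alpha_i}$). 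This is the analytic heart of the proposition; your proof never invokes \eqref{eq-1} anywhere, and as written it would ``prove'' the statement for every $p\in(1,2)$, which is not what the proposition asserts.

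The remainder of your proposal is essentially the paper's route. The involution property is verified in the paper by computing $d_{\alpha_\sigma(\alpha_\sigma(D))}(e_i)=d_D(e_i)$ in the two charts $(U,z)$ and $(\sigma_X(U),\overline{z\circ\sigma_X})$, which is the rigorous version of your formal cancellation via $\sigma_X^*\overline{\phi}\circ\phi=\pm\mathrm{Id}_E$; and the anti-symplectic sign is obtained, exactly as you say, from the single orientation reversal of $\sigma_X$ acting on the integral defining $\omega$, with anti-linearity of $a\mapsto\phi\sigma_X^*\overline{a}\phi^{-1}$ and reality of the pairing contributing no extra sign (your density argument is an acceptable extra precaution, not needed in the paper). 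To repair the gap, insert before your conjugation step the computation of the matrices of $\phi$ and $\tilde{\phi}$ in adapted frames, and verify their class $\mathcal{D}^p_2$ using \eqref{eq-1}; only then does conjugating the local expression $d+\tfrac{\alpha}{2}\bigl(\tfrac{dz}{z}-\tfrac{d\overline{z}}{\overline{z}}\bigr)+a$ stay inside $\mathcal{A}^p$.
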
 
\begin{proof}
Let $(e_i)_{i=1}^r$ and $(f_i)_{i=1}^r$ be local frames which respect flag structure of $E$ and 
$\sigma_X^*\overline{E}$ at $x\in S$. Let $(\phi)^i_j$ be the matrix of $\phi$ (with respect to 
frames $(e_i)$ and $(f_i)$) which respects flag structure, i. e.,
$
(\phi)^i_j=0 \text{ if }\alpha_i<\alpha_j\;.
$
The matrix of $\phi$ with respect to adapted frames $(\frac{e_i}{|z|^{\alpha_i}})$ and 
$(\frac{f_i}{|z|^{\alpha_j}})$ is $|z|^{\alpha_i-\alpha_j}(\phi)^i_j$, which is in $D^p_2(\text{End E})$, 
since $p$ satisfies the condition \eqref{eq-1}.
Note that $\{d\overline{z_i\circ \sigma_X}\otimes f_j\}_{j=1}^r$ is a local frame for 
$T^*X\otimes \sigma_X^*\overline{E}$ around $x$, and the matrix of $\tilde{\phi}$ also respects 
flag structure and lies in $D^p_2(A^1(\mathrm{End} (E))$.

If $(e_i)_{i=1}^r$ is a local frame for $E$ on chart $(U,z)$ around $x$, then 
$(\phi^{-1}(e_i))_{i=1}^r$ will be a local frame for $\sigma_X^*\overline{E}$ on chart $(U,z)$, 
and also a local frame for $E$ on chart $(\sigma_X(U),\overline{z\circ \sigma_X})$.

Any connection $D\in \mathcal{A}^p$ can be expressed (locally on chart $(U,z)$) as
$$
d_D=d+\frac{\alpha}{2}\left(\frac{dz}{z}-\frac{d\overline{z}}{\overline{z}}\right)+
dz\otimes a^{(1,0)}+d\overline{z}\otimes a^{(0,1)}\;,
$$
where $a^{(1,0)},a^{(0,1)}\in D^P_1(\text{End E})$. Similarly, a connection $D$ can be expressed 
locally on chart $(\sigma_X(U),\overline{z\circ \sigma_X})$ as
$$
d_D=d+\frac{\alpha}{2}\left(\frac{d(\overline{z\circ \sigma_X})}{(\overline{z\circ \sigma_X})}-
\frac{d(z\circ \sigma_X)}{(z\circ \sigma_X)}\right)+d(\overline{z\circ\sigma_X})\otimes b^{(1,0)}+
d(z\circ \sigma_X)\otimes b^{(0,1)}\;,
$$ 
where $b^{(1,0)},b^{(0,1)}\in D^P_1(\text{End E})(\sigma_X(U))$.

Hence, the induced connection $\sigma_X^*\overline{D}$ can be expressed locally on chart $(U,z)$ as
$$
d_{\sigma_X^*\overline{D}}=d+\frac{\alpha}{2}\left(\frac{d(\overline{z\circ \sigma_X})}{(\overline{z\circ \sigma_X})}
-\frac{d(z\circ \sigma_X)}{(z\circ \sigma_X)}\right)+
d(\overline{z\circ\sigma_X})\otimes b^{(1,0)}+d(z\circ \sigma_X)\otimes b^{(0,1)}
$$ 
and on chart $(\sigma_X(U),\overline{z\circ \sigma_X})$ as
$$
d_{\sigma_X^*\overline{D}}=d+\frac{\alpha}{2}\left(\frac{dz}{z}-\frac{d\overline{z}}{\overline{z}}\right)+dz\otimes a^{(1,0)}+d\overline{z}\otimes a^{(0,1)}\;.
$$ 

In the chart $(U,z)$, we have 
\[
\begin{array}{ll}
d_{\alpha_{\sigma}(D)}(e_i) 
& =\tilde{\phi}\circ d_{\sigma_X^*\overline{D}}\circ (\phi^{-1}(e_i)) \\
& \\
& =d+\tilde{\phi}\circ \Big[\frac{\alpha_{ii}}{2}\left(\frac{d(\overline{z\circ \sigma_X})}{(\overline{z\circ\sigma_X})}-\frac{d(z\circ\sigma_X)}{(z\circ\sigma_X)}\right)\phi^{-1}(e_i)+d(\overline{z\circ\sigma_X})\otimes \displaystyle \sum_{j}[b^{(1,0)}]^{j}_i\phi^{-1}(e_{j})\\
& \quad \quad +d(z\circ\sigma_X)\otimes \displaystyle \sum_{j}[b^{(0,1)}]^{j}_i\phi^{-1}(e_{j})\Big]\\
& \\
& =d+\frac{\alpha_{ii}}{2}(\frac{dz}{z}-\frac{d\overline{z}}{\overline{z}})e_i+dz\otimes \displaystyle \sum_{j}[\overline{b^{(1,0)}\circ \sigma_X}]^{j}_i e_{j}+d\overline{z}\otimes \displaystyle \sum_{j}[\overline{b^{(0,1)}\circ \sigma_X}]^{j}_i e_{j}
\end{array}
\]
Hence, 
$$
d_{\alpha_{\sigma}(D)}\equiv d+\frac{\alpha}{2}\left(\frac{dz}{z}-\frac{d\overline{z}}{\overline{z}}\right)+dz\otimes \overline{b^{(1,0)}\circ \sigma_X}+d\overline{z}\otimes \overline{b^{(0,1)}\circ \sigma_X}.
$$
This shows that $\alpha_\sigma(D)\in \mathcal{A}^p$. Now, in the chart $(\sigma_X(U),\overline{z\circ\sigma_X})$, 
we have
$$
d_{\alpha_{\sigma}(D)}\equiv d+\frac{\alpha}{2}\left(\frac{d(\overline{z\circ\sigma_X})}{(\overline{z\circ\sigma_X})}-\frac{d(z\circ\sigma_X)}{(z\circ\sigma_X)}\right)+d(\overline{z\circ\sigma_X})\otimes \overline{a^{(1,0)}\circ \sigma_X}+d(z\circ\sigma_X)\otimes \overline{a^{(0,1)}\circ \sigma_X}.
$$

The induced connection $\sigma_X^*\overline{\alpha_\sigma(A)}$ can be expressed locally on chart $(U,z)$ as
$$
d_{\sigma_X^*{\alpha_{\sigma}(D)}}\equiv d+\frac{\alpha}{2}\left(\frac{d(\overline{z\circ\sigma_X})}{(\overline{z\circ\sigma_X})}-\frac{d(z\circ\sigma_X)}{(z\circ\sigma_X)}\right)+d(\overline{z\circ\sigma_X})\otimes \overline{a^{(1,0)}\circ \sigma_X}+d(z\circ\sigma_X)\otimes \overline{a^{(0,1)}\circ \sigma_X}.
$$
Hence, in chart $(U,z)$, we have
\begin{align*}
d_{\alpha_{\sigma}(\alpha_{\sigma}(D))}(e_i) 
&= \tilde{\phi}\circ d_{\sigma^*_X\overline{(\alpha_\sigma(A))}}\circ(\phi^{-1}(e_i))\\
&=d+\tilde{\phi}\circ\Bigl[\frac{\alpha_{ii}}{2}\left(\frac{d(\overline{z\circ \sigma_X}}{\overline{z\circ \sigma_X}}-\frac{d(z\circ\sigma_X)}{z\circ \sigma_X}\right)\phi^{-1}(e_i)\\
&\qquad +d(\overline{z\circ\sigma_X})\otimes \displaystyle\sum_j[\overline{a^{(1,0)}\circ\sigma_X}]^j_i\phi^{-1}(e_j)\\
&\qquad +d(z\circ \sigma_X)\otimes \displaystyle\sum_j[\overline{a^{(0,1)}\circ\sigma_X}]^j_i\phi^{-1}(e_j)\Bigr]\\
&=d+\frac{\alpha_{ii}}{2}\left(\frac{dz}{z}-\frac{d\overline{z}}{\overline{z}}\right)e_i+d(z)\otimes \displaystyle\sum_j[a^{(1,0)}]^j_ie_j\\
&\qquad+d(\overline{z})\otimes \displaystyle\sum_j[a^{(0,1)}]^j_ie_j \\
&=d_D(e_i)
\end{align*}
From this, it follows that the map $\alpha_\sigma \colon \mathcal{A}^p \ra \mathcal{A}^p$ is an involution.

The map $L^p_1(A^1(X,\mathfrak{u}(E)))\ra L^p_1(A^1(X,\mathfrak{u}(E)))$ given by 
$a\mapsto \alpha_\sigma(a):=\phi\sigma^*\overline{a}\phi^{-1}$
is anti-linear. Since $\langle\; ,\;\rangle$ is real valued on anti-Hermitian matrices and $\sigma$ is an orientation reversing isometry of $X$, we have
$$
\omega(\alpha_\sigma(a),\alpha_\sigma(b))=-\omega(a,b)\;.
$$
\end{proof}

We say that $D\in \mathcal{A}^p$ is real (resp. quaternionic) if $\alpha_\sigma(D) = D$. Let $\gamma_\sigma$ denote the induced 
involution on $\mathcal{G}^p$.
We denote by $\beta_\sigma$ the involution
on $L^p(A^2(\mathfrak{u}(E))$ given by
$$
\beta_\sigma(\omega):= \tilde{\phi}\circ \sigma_X^*\overline{\omega}\circ \phi^{-1}\;,
$$
where $\tilde{\phi}$ is the isomorphism induced by the real structure on the bundle 
$\Lambda^2T^*X\otimes \mathfrak{u}(E))$ (see \eqref{real}).

The following result is analogous to that of \cite[Proposition 3.5]{Sch12}.

\begin{proposition}\label{prop-1}With the above notations:
\begin{enumerate}
\item For $g\in \mathcal{G}^p$ and $D\in \mathcal{A}^p$, we have $\alpha_\sigma(g(D)) = 
\gamma_\sigma(g)(\alpha_\sigma(D))$.
\item For $D\in \mathcal{A}^p$, we have $F_{\alpha_\sigma(D)} = \beta_\sigma(F_D)$.
\end{enumerate}
\end{proposition}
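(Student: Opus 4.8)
The plan is to verify both identities by unwinding the definitions of the involutions $\alpha_\sigma$, $\gamma_\sigma$, and $\beta_\sigma$ together with the gauge action and curvature formulas recorded earlier in this section, exactly as in the proof of the previous proposition. For part (1), I would start from the definition $d_{\alpha_\sigma(D)} = \tilde\phi \circ d_{\sigma_X^*\overline{D}} \circ \phi^{-1}$ and the gauge action $d_{g(D)} = g \circ d_D \circ g^{-1}$ (equivalently $g(D) = D + gag^{-1} - (Dg)g^{-1}$ in the additive notation of the excerpt). The key observation is that the operation $D \mapsto \sigma_X^*\overline{D}$ is compatible with the gauge action in the sense that $\sigma_X^*\overline{g(D)} = (\sigma_X^*\overline{g})(\sigma_X^*\overline{D})$, since conjugation and pullback are functorial with respect to composition of connection operators. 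Conjugating this relation by $\tilde\phi$ on the left and $\phi^{-1}$ on the right, and recognizing that $\gamma_\sigma(g) = \phi(\sigma_X^*\overline{g})\phi^{-1}$ is precisely the transported gauge transformation, I expect to read off
$$
d_{\alpha_\sigma(g(D))} = \tilde\phi \circ (\sigma_X^*\overline{g}) \circ d_{\sigma_X^*\overline{D}} \circ (\sigma_X^*\overline{g})^{-1} \circ \phi^{-1} = \gamma_\sigma(g) \circ d_{\alpha_\sigma(D)} \circ \gamma_\sigma(g)^{-1},
$$
which is exactly $\alpha_\sigma(g(D)) = \gamma_\sigma(g)(\alpha_\sigma(D))$. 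This is the same bookkeeping already carried out for $\mathcal{G}_{\mathrm{par}}$ acting on $\mathcal{C}$ in the first lemma of this subsection, so the argument should transcribe almost verbatim once the Sobolev-connection version of $\gamma_\sigma$ is in place.

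For part (2), I would use the curvature formula $F_A = F_D + Da + \tfrac{1}{2}[a,a]$ and the fact that curvature is natural under pullback and conjugation: for a connection $\nabla$, the curvature transforms as $F_{\tilde\phi \circ \nabla \circ \phi^{-1}} = \tilde\phi \circ F_\nabla \circ \phi^{-1}$, and $F_{\sigma_X^*\overline{D}} = \sigma_X^*\overline{F_D}$ because conjugating and pulling back a connection conjugates and pulls back its curvature $2$-form. Chaining these, $F_{\alpha_\sigma(D)} = \tilde\phi \circ \sigma_X^*\overline{F_D} \circ \phi^{-1} = \beta_\sigma(F_D)$, which is the claimed identity since the right-hand side is the definition of $\beta_\sigma$ applied to $F_D$. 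The only genuinely substantive point is to confirm that the naturality of curvature holds in the Sobolev category, i.e.\ that for $p \in (1,2)$ satisfying \eqref{eq-1} the products and derivatives appearing in $F_A$ stay in the correct function spaces so that the formula $F_A = F_D + Da + \tfrac12[a,a]$ and its transformation under $\alpha_\sigma$ are legitimate; but this is precisely the content of the curvature map $F\colon \mathcal{A}^p \ra L^p(A^2(\mathfrak{u}(E,h)))$ already established via \cite[Lemma 1.1]{U82}, so I can invoke it directly.

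The main obstacle I anticipate is not conceptual but organizational: making sure that the real structure $\tilde\phi$ on $T^*X \otimes E$ (and its wedge-square analogue on $\Lambda^2 T^*X \otimes \mathfrak{u}(E)$ used to define $\beta_\sigma$) interacts correctly with the derivative operator $D$ when one computes $D(\sigma_X^*\overline a)$ versus $\sigma_X^*\overline{(Da)}$. Because $\sigma_X$ is antiholomorphic, pulling back by $\sigma_X$ and conjugating swaps the roles of $\partial$ and $\overline\partial$, so I would need to check that the sign and type conventions match those already used in the previous proposition's frame computation — where the local expressions for $a^{(1,0)}, a^{(0,1)}$ transform into $\overline{a^{(1,0)}\circ\sigma_X}, \overline{a^{(0,1)}\circ\sigma_X}$. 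Having that calculation on hand, I would phrase the proof of this proposition abstractly (via functoriality of curvature and the gauge action) rather than repeating the frame-by-frame computation, since the local verification that $\alpha_\sigma$ preserves $\mathcal{A}^p$ was already completed above and guarantees all the objects live in the right Sobolev spaces.
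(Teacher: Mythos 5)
Your proposal is correct and follows essentially the same route as the paper: part (1) is the same bookkeeping (compatibility of $D\mapsto \sigma_X^*\overline{D}$ with the gauge action, then conjugation by $\tilde{\phi}$ and $\phi^{-1}$ to recognize $\gamma_\sigma(g)$ and $d_{\alpha_\sigma(D)}$), and part (2) in the paper is exactly your naturality argument, carried out by computing $d_{\alpha_\sigma(D)}\circ d_{\alpha_\sigma(D)}$ on sections to obtain $\tilde{\phi}\circ F_{\sigma_X^*\overline{D}}\circ \phi^{-1}=\beta_\sigma(F_D)$. The only cosmetic difference is that the paper expands the gauge action additively while you use the conjugation form, and the paper does not even pause over the Sobolev regularity point you flag.
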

\begin{proof}
For $g\in \mathcal{G}^p$ and $D\in \mathcal{A}^p$ we have,
\[
\begin{array}{ll}
\alpha_\sigma(g\cdot D) 
& = \tilde{\phi} \circ \sigma_X^*(g\cdot D) \circ \phi^{-1} \\
& = \tilde{\phi}\circ (\sigma_X^*D+(d_{\sigma^*_X D}\sigma_X^*g)(\sigma_X^*g^{-1}))\circ \phi^{-1} \\
& = \alpha_\sigma(D) + ((\tilde{\phi}\circ d_{\sigma^*_X D} \circ \phi^{-1}) (\phi \circ \sigma_X^*g \circ \phi^{-1}))((\phi \circ \sigma_X^*g^{-1} \circ \phi^{-1}))\\
& = \alpha_\sigma(D) + (d_{\alpha_\sigma(D)}\gamma_\sigma(g))\gamma_\sigma(g^{-1}) \\
& =  \gamma_\sigma(g)\cdot \alpha_\sigma(D)\,.
\end{array}
\]
For a section $s\in A^0(E)$, we have
\[
\begin{array}{ll}
d_{\alpha_{\sigma}(D)}(s)
& =\tilde{\phi}\circ d_{\sigma^*\overline{D}}\circ \phi^{-1}(s) \\
& =\tilde{\phi}\circ \overline{d_D}(\overline{\phi^{-1}}\circ s\circ \sigma_X)\circ \sigma_X
\end{array}
\]
Hence,  
\[
\begin{array}{ll}
d_{\alpha_{\sigma}(D)}\circ d_{\alpha_{\sigma}(D)}(s)

& = \tilde{\phi}\circ (\overline{d_D}\circ d_D(\overline{\phi^{-1}}\circ s\circ \sigma_X)\circ \sigma_X \\

& = \tilde{\phi}\circ \sigma_X^*(d_D\circ d_D)\circ \phi^{-1}(s) \\
&=\tilde{\phi}\circ F_{\sigma^*_X \overline{D}}\phi^{-1}(s)
\end{array}
\]
From this, we can conclude that $F_{\alpha_{\sigma}(D)}\equiv \beta_{\sigma}(F_D)$.
\end{proof}

Let $\mathcal{A}^p_\mathrm{ss}:= (\star F)^{-1}(2\pi \sqrt{-1}\mathrm{par}\text{-}\mu(E))$. 
From the above Proposition \ref{prop-1}, it follows that the involution $\alpha_\sigma$ induces an involution 
on $\mathcal{A}^p_\mathrm{ss}$. Moreover, the group $\mathcal{G}^{p, \sigma}$ 
acts on the fixed point set
$
\mathcal{A}^{p,\alpha_\sigma}_\mathrm{ss},
$
of the involtion $\alpha_\sigma$.
For a real connection $D\in \mathcal{A}^p$, we denote by $O_{\mathcal{G}^p}(D)$ the orbit of $D$ 
with respect to the action of $\mathcal{G}^p$ in $\mathcal{A}^p$, and by $O_{\mathcal{G}^{p, \sigma}}(D)$ the orbit 
of $D$ with respect to the action of $\mathcal{G}^{p, \sigma}$ in $\mathcal{A}^{p, \alpha_\sigma}$.

\begin{proposition}\cite[Proposition 3.6]{Sch12}\label{prop-2}\rm{
If $D$ is a real connection in $\mathcal{A}^p$, which defines a poly-stable real (resp. quaternionic) structure, then 
$O_{\mathcal{G}^p}(D) \cap \mathcal{A}^{p, \alpha_\sigma} = O_{\mathcal{G}^{p, \sigma}}(D)$.
}
\end{proposition}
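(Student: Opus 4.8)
The plan is to prove the two inclusions separately; the substance lies entirely in $O_{\mathcal{G}^p}(D)\cap\mathcal{A}^{p,\alpha_\sigma}\subseteq O_{\mathcal{G}^{p,\sigma}}(D)$. The inclusion $\supseteq$ is immediate: for $h\in\mathcal{G}^{p,\sigma}$ we have $h\cdot D\in O_{\mathcal{G}^p}(D)$ tautologically, while Proposition \ref{prop-1}(1) together with $\gamma_\sigma(h)=h$ and $\alpha_\sigma(D)=D$ gives $\alpha_\sigma(h\cdot D)=\gamma_\sigma(h)\cdot\alpha_\sigma(D)=h\cdot D$, so $h\cdot D\in\mathcal{A}^{p,\alpha_\sigma}$. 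For the reverse inclusion I would take $D'=g\cdot D$ with $g\in\mathcal{G}^p$ and $\alpha_\sigma(D')=D'$, and set $u:=g^{-1}\gamma_\sigma(g)$. Proposition \ref{prop-1}(1) and $\alpha_\sigma(D)=D$ yield $\gamma_\sigma(g)\cdot D=\alpha_\sigma(g\cdot D)=D'=g\cdot D$, so $u$ fixes $D$, i.e. $u\in\mathrm{Stab}_{\mathcal{G}^p}(D)$, and a short computation using $\gamma_\sigma^2=\mathrm{Id}$ gives $\gamma_\sigma(u)=u^{-1}$. The whole problem then reduces to producing $v\in\mathrm{Stab}_{\mathcal{G}^p}(D)$ with $u=v\,\gamma_\sigma(v)^{-1}$: for then $h:=gv$ satisfies $h\cdot D=g\cdot(v\cdot D)=D'$ and $\gamma_\sigma(h)=h$, exhibiting $D'\in O_{\mathcal{G}^{p,\sigma}}(D)$.

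Next I would pin down the group $\mathrm{Stab}_{\mathcal{G}^p}(D)$. The gauge transformations fixing $D$ are $D$-parallel, hence smooth by elliptic regularity, and form a finite-dimensional compact Lie group; under Biquard's correspondence (Theorem \ref{Biquard-thm}) they are exactly the unitary parabolic automorphisms of the polystable parabolic bundle determined by $D$. Using the real (resp. quaternionic) polystable decomposition from Proposition \ref{prop-rsp} and Corollary \ref{cor--real-S-equiv}, this automorphism group splits along isotypic components into a finite product $\prod_i U(m_i)$, with $m_i$ the multiplicity of the $i$-th stable factor, and $\gamma_\sigma$ acts compatibly with the decomposition, each isotypic block being of one of two kinds: a real (resp. quaternionic) stable factor, or a conjugate pair $F_\bullet\oplus(\sigma_X^*\overline{F})_\bullet$ with $F_\bullet$ stable but not real. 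This turns the coboundary problem into a finite-dimensional statement about a compact group equipped with an explicit involution.

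Finally I would solve $u=v\,\gamma_\sigma(v)^{-1}$, which is precisely the vanishing of the class of the cocycle $u$ (recall $\gamma_\sigma(u)=u^{-1}$) in the nonabelian cohomology $H^1(\mathbb{Z}/2,\mathrm{Stab}_{\mathcal{G}^p}(D))$ for the $\gamma_\sigma$-action. Working factor by factor, a conjugate pair contributes a block $U(m)\times U(m)$ with $\gamma_\sigma$ swapping the two copies; here $H^1$ vanishes by Shapiro's lemma, the coefficient group being induced from the trivial subgroup. A real (resp. quaternionic) stable factor contributes a single $U(m)$ on which $\gamma_\sigma$ acts by a conjugation-type (resp. symplectic-type) involution; the vanishing of $H^1$ then says that every symmetric unitary (resp. compatible skew) matrix is a coboundary $v\,\gamma_\sigma(v)^{-1}$, which is the Autonne--Takagi factorization in the first case and its symplectic analogue in the second. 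Assembling the factors yields the required $v$.

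I expect this last step to be the main obstacle. Everything preceding it is formal juggling of $\alpha_\sigma$, $\gamma_\sigma$ and Proposition \ref{prop-1}, but the coboundary equation genuinely requires the precise form of the involution on $\mathrm{Stab}_{\mathcal{G}^p}(D)$ — supplied by the trichotomy of stable factors in Proposition \ref{prop-rsp} and Corollary \ref{cor-rs-simple} — and the real and quaternionic cases must be checked separately, since the relevant involution is of conjugation type in one and of symplectic type in the other, with correspondingly different $H^1$-vanishing inputs.
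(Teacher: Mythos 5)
Your proposal is correct in outline---the reduction to the cocycle $u=g^{-1}\gamma_\sigma(g)\in\mathrm{Stab}_{\mathcal{G}^p}(D)$ with $\gamma_\sigma(u)=u^{-1}$ is exactly right, and solving $u=v\,\gamma_\sigma(v)^{-1}$ block-by-block in the stabilizer does finish the proof---but it is a genuinely different route from the one the paper invokes. The paper's proof is a deferral to \cite[Proposition 3.6]{Sch12}, adapted ``using Proposition \ref{prop-rsp} and Biquard's Theorem \ref{Biquard-thm}'', and the roles of those two ingredients there are: given $D'=g\cdot D$ with $D,D'$ both real, the real (resp.\ quaternionic) polystable parabolic bundles determined by $D$ and $D'$ on $(E,\sigma^E)$ are holomorphically isomorphic via $g$, hence isomorphic \emph{as real (resp.\ quaternionic) parabolic bundles} by Corollary \ref{cor--real-S-equiv} (this is where Proposition \ref{prop-rsp} enters); that isomorphism is a priori only a $\gamma_\sigma$-fixed \emph{complex} gauge transformation, and it is promoted to a $\gamma_\sigma$-fixed \emph{unitary} one using the uniqueness part of Theorem \ref{Biquard-thm} together with polar decomposition, which $\gamma_\sigma$ respects. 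So where the paper kills the cocycle by passing through the complexified stabilizer and unitarizing analytically, you kill it directly in the compact stabilizer by explicit factorizations (Autonne--Takagi, its skew analogue, Shapiro). Your version isolates the finite-dimensional group cohomology that is really at stake and needs no analytic uniqueness statement at that step; the paper's version recycles Corollary \ref{cor--real-S-equiv} and avoids all case-by-case matrix analysis.

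Two assertions in your sketch need repair, though your toolkit already contains the fixes. First, the dichotomy ``conjugation type in the real case, symplectic type in the quaternionic case'' is false: a real polystable bundle can contain a type-(2) factor $F_\bullet\oplus(\sigma_X^*\overline{F})_\bullet$ of Proposition \ref{prop-rsp} in which $F_\bullet$ is itself quaternionic; then $F_\bullet$ gives a \emph{single} isotypic block of even multiplicity on which $\gamma_\sigma$ acts by a symplectic-type involution, inside a real bundle (and symmetrically, conjugation-type blocks occur inside quaternionic polystable bundles). The case division must be governed by whether the induced antilinear map on each multiplicity space squares to $+\mathrm{Id}$ or $-\mathrm{Id}$, not by the type of the ambient bundle, and both factorization lemmas are needed in both cases. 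Second, the identification $\mathrm{Stab}_{\mathcal{G}^p}(D)\cong\prod_i U(m_i)$ is not a formality: it amounts to saying that holomorphic parabolic endomorphisms of the bundle defined by $D$ are $D$-parallel, which holds when $\star F_D$ is central (the standard Weitzenb\"ock argument, available in Biquard's weighted setting) but can fail for an arbitrary unitary connection inducing a polystable structure, in which case the unitary stabilizer is only a proper compact subgroup of $\prod_i U(m_i)$. So your appeal to Theorem \ref{Biquard-thm} at that step is not cosmetic: you should state that $D$ is taken in $\mathcal{A}^p_{\mathrm{ss}}$ (the only case in which the paper uses Proposition \ref{prop-2}), or otherwise justify the stabilizer computation.
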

\begin{proof}
The proof follows in the same line of arguments as in \cite[Proposition 3.6]{Sch12} using Proposition \ref{prop-rsp}
and Biquard's Theorem \ref{Biquard-thm}.
\end{proof}

\begin{theorem}\label{main-thm}
Let $(E_\bullet, \sigma^E)$ be a real (resp. quaternionic) smooth parabolic bundle over $(X, \sigma)$ 
having parabolic type $\tau_p$. Let $h$ be an adapted Hermitian metric $h$ on $E_\bullet$. 
Let $\mathcal{N}^{\tau_p}_{\tilde{\sigma}}$ denote the Lagrangian quotient 
$\mathcal{A}^{p,\alpha_\sigma}_\mathrm{ss}/\mathcal{G}^{p, \sigma}$. Then, the points of the space 
$\mathcal{N}^{\tau_p}_{\tilde{\sigma}}$ are in bijection with the real (resp. quaternionic) $S$-equivalence 
classes of real (resp. quaternionic) semistable parabolic vector bundles that are smoothly isomorphic 
to $(E_\bullet, \sigma^E)$.
\end{theorem}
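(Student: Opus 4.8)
The plan is to construct a bijection directly, descending the complex Kobayashi--Hitchin correspondence $\mathcal{A}^p_\mathrm{ss}/\mathcal{G}^p_h \cong M_X^\mathrm{ss}(\tau_p)$ recorded above to the $\alpha_\sigma$-fixed loci. First I would define the map
$$
\Phi \colon \mathcal{A}^{p,\alpha_\sigma}_\mathrm{ss}/\mathcal{G}^{p,\sigma} \ra \big\{\,\text{real/quaternionic } S\text{-equivalence classes}\,\big\}
$$
as follows. A real connection $D \in \mathcal{A}^{p,\alpha_\sigma}_\mathrm{ss}$ has $\star F_D$ equal to the central value $2\pi\sqrt{-1}\,\mathrm{par}\mu(E)$, so its $(0,1)$-part defines a polystable holomorphic parabolic bundle $E_\bullet^D$ by the semistable case of Theorem \ref{Biquard-thm} (applied summand-wise). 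The condition $\alpha_\sigma(D)=D$, i.e.\ $\tilde{\phi}\circ d_{\sigma_X^*\overline{D}}\circ\phi^{-1}=d_D$, endows $E_\bullet^D$ with a real (resp.\ quaternionic) structure compatible with $\sigma_X$, so it is a real (resp.\ quaternionic) polystable parabolic bundle on $(E,\sigma^E)$, and I assign to $[D]$ its real (resp.\ quaternionic) $S$-equivalence class. Because a gauge transformation $g\in\mathcal{G}^{p,\sigma}$ satisfies $\alpha_\sigma(g(D))=\gamma_\sigma(g)(\alpha_\sigma(D))=g(D)$ by Proposition \ref{prop-1}(1) and induces an isomorphism of the associated real parabolic bundles, the map $\Phi$ is well defined on $\mathcal{G}^{p,\sigma}$-orbits.

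For surjectivity I would start from a real (resp.\ quaternionic) semistable parabolic bundle smoothly isomorphic to $(E_\bullet,\sigma^E)$ and use Corollary \ref{cor--real-S-equiv} to replace it, within its real $S$-equivalence class, by a real (resp.\ quaternionic) polystable representative $(E'_\bullet,\sigma')$. Its underlying holomorphic polystable bundle carries a Yang--Mills connection $D_0$ with $\star F_{D_0}$ central, unique up to $\mathcal{G}^p_h$ by Theorem \ref{Biquard-thm}. Since $F_{\alpha_\sigma(D_0)}=\beta_\sigma(F_{D_0})$ by Proposition \ref{prop-1}(2) and $\beta_\sigma$ fixes the central value, the connection $\alpha_\sigma(D_0)$ is again Yang--Mills and, through the isomorphism $\sigma'$, induces the same polystable holomorphic bundle; uniqueness then forces $\alpha_\sigma$ to preserve the orbit $O_{\mathcal{G}^p}(D_0)$. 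A $\mathbb{Z}/2$-averaging (fixed-point) argument, exactly as in the real Narasimhan--Seshadri theorem of \cite{Sch12}, produces a genuinely real connection $D\in O_{\mathcal{G}^p}(D_0)\cap\mathcal{A}^{p,\alpha_\sigma}$, which lies in $\mathcal{A}^{p,\alpha_\sigma}_\mathrm{ss}$ and satisfies $\Phi([D])$ equal to the given class.

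For injectivity, suppose $D_1,D_2\in\mathcal{A}^{p,\alpha_\sigma}_\mathrm{ss}$ have $\Phi([D_1])=\Phi([D_2])$. Then the associated real (resp.\ quaternionic) polystable parabolic bundles are isomorphic as real (resp.\ quaternionic) parabolic bundles; in particular their underlying holomorphic polystable bundles are isomorphic, so $D_1$ and $D_2$ lie in a common $\mathcal{G}^p_h$-orbit, $D_2\in O_{\mathcal{G}^p}(D_1)$. As both are real we have $D_2\in O_{\mathcal{G}^p}(D_1)\cap\mathcal{A}^{p,\alpha_\sigma}$, and Proposition \ref{prop-2} gives $O_{\mathcal{G}^p}(D_1)\cap\mathcal{A}^{p,\alpha_\sigma}=O_{\mathcal{G}^{p,\sigma}}(D_1)$. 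Hence $D_2\in O_{\mathcal{G}^{p,\sigma}}(D_1)$, so $D_1$ and $D_2$ are $\mathcal{G}^{p,\sigma}$-equivalent and $[D_1]=[D_2]$ in $\mathcal{N}^{\tau_p}_{\tilde{\sigma}}$.

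The hard part will be the existence step inside surjectivity: passing from the $\alpha_\sigma$-invariance of the Yang--Mills orbit to an actual $\alpha_\sigma$-fixed point inside it. In the stable case the stabilizer is only the scalars and the averaging is immediate, but for a genuinely polystable bundle the stabilizer is a product of unitary groups, and the $\mathbb{Z}/2$-averaging must be carried out compatibly with the decomposition $\sigma'=\bigoplus_i\sigma'_i$ into real/quaternionic stable summands supplied by Proposition \ref{prop-rsp}, while ensuring that the quaternionic sign in $\sigma_X^*\overline{\phi}\circ\phi=-\mathrm{Id}_E$ does not obstruct the fixed point. This is precisely the point at which Proposition \ref{prop-2} and the structure theorem Proposition \ref{prop-rsp} do the real work.
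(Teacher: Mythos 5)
Your proposal is correct and is essentially an expanded version of the paper's own proof: the paper simply cites the argument of \cite[Theorem 3.7]{Sch12}, to be carried out with Theorem \ref{Biquard-thm}, Proposition \ref{prop-1}, and Proposition \ref{prop-2}, which is exactly the structure you give (well-definedness via Proposition \ref{prop-1}, surjectivity via Corollary \ref{cor--real-S-equiv} and Biquard's theorem with the Schaffhauser-style fixed-point argument, injectivity via Proposition \ref{prop-2}). You even correctly isolate the genuinely delicate step -- producing an $\alpha_\sigma$-fixed Yang--Mills connection in the polystable case -- which the paper leaves entirely to the reference.
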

\begin{proof}
The proof follows in the same line of arguments as in the proof of \cite[Theorem 3.7]{Sch12} with the aid of 
the Theorem \ref{Biquard-thm}, Proposition \ref{prop-1}, Proposition \ref{prop-2}.
\end{proof}

\begin{remark}\rm{
From the above Theorem \ref{main-thm}, it follows that a real (resp. quaternionic) parabolic bundle $(E_\bullet, \sigma^E)$
is polystable if and only if it admits a real (resp. quaternionic) adapted Hermitian-Yang-Mills connection 
(see \cite[Theorem 3.6]{BS20}).
}
\end{remark}

For a connection $D\in \mathcal{A}$, let us consider the connection $B = \frac{1}{2}(D + \alpha_\sigma(D))$. 
Then, we have
$$F_B = \frac{1}{2}(F_D + F_{\alpha_\sigma(D)})\,.$$
To see this,
let $\{e_i\}_{i=1}^r$ be local frame of $E$ over $(U,z)$ then $\{\phi^{-1}(e_i)\}_{i=1}^r$ will be local frame of $E$ over $(\sigma_X(U),\overline{z\circ \sigma_X})$. Let $\omega_D=a^{(1,0)}dz+a^{(0,1)}d\overline{z}$, where 
$$
D(e_j)=\displaystyle\sum_i\Bigl\{[a^{(1,0)}]^i_jdz+[a^{(0,1)}]^i_jd\overline{z}\Bigr\}e_i
$$ and $\omega_{\alpha^{\sigma}(D)}=\overline{b^{(1,0)}\circ\sigma_X}dz+\overline{b^{(0,1)}\circ\sigma_X}d\overline{z}$,
where $$D(\phi^{-1}(e_j))=\sum\Bigl\{[b^{(1,0)}]^i_jd(\overline{z_i\circ\sigma_X})+[b^{(0,1)}]^i_jd(z_i\circ\sigma_X)\Bigr\}$$

i.e., $\omega_{\alpha^{\sigma}(D)}=b'^{(1,0)}dz+b'^{(0,1)}d\overline{z}$

Note that 
$$
\begin{array}{ll}
\omega_B
& = [\frac{\omega_D+\omega_{\alpha_{\sigma}(D)}}{2}] \\
& \\
& = [\frac{a^{(1,0)}+b'^{(1,0)}}{2}]dz+[\frac{a^{(0,1)}+b'^{(0,1)}}{2}]d\overline{z} \\
& \\
& = \frac{ab'^{(1,0)}}{2}dz+\frac{ab'^{(0,1)}}{2}d\overline{z},
\end{array}
$$ 
where $ab'^{(1,0)}=a^{(1,0)}+b'^{(1,0)},ab'^{(0,1)}=a^{(0,1)}+b'^{(0,1)}\in A^0(\text{End E})$

Now,
\[
\begin{array}{ll}
\Omega_D
& = d(\omega_D)+\omega_D\wedge\omega_D \\
& \\
& = [\frac{\partial a^{(1,0)}}{\partial\overline{z}}d\overline{z}\wedge dz 
+ \frac{\partial a^{(0,1)}}{\partial z}dz\wedge d\overline{z}]+[a^{(1,0)}]^2dz\wedge dz \\
& \\
& \quad + a^{(1,0)}\cdot a^{(0,1)}\{dz\wedge d\overline{z}+d\overline{z}+dz\}+
[a^{(0,1)}]^2d\overline{z}\wedge d\overline{z}]
\end{array}
\]

Hence, we have
$$\Omega_D=[\frac{\partial a^{(0,1)}}{\partial z}-
\frac{\partial a^{(1,0)}}{\partial\overline{z}}]dz\wedge d\overline{z}$$
$$\Omega_{\alpha^{\sigma}(D)}=[\frac{\partial b'^{(1,0)}}{\partial z}-
\frac{\partial b'^{(0,1)}}{\partial\overline{z}}]dz\wedge d\overline{z}$$

and hence,
\[
\begin{array}{ll}
\Omega_B
& = [\frac{\partial ab'^{(0,1)}}{\partial z}-
\frac{\partial ab'^{(1,0)}}{\partial \overline{z}}]\frac{dz\wedge d\overline{z}}{2} \\
& \\
& = [\{\frac{\partial a^{(0,1)}}{\partial z}-\frac{\partial a^{(1,0)}}{\partial\overline{z}}\}+
\{\frac{\partial b'^{(1,0)}}{\partial z}-
\frac{\partial b'^{(0,1)}}{\partial\overline{z}}\}]\frac{dz\wedge d\overline{z}}{2} \\
& \\
& = \frac{1}{2}[\Omega_D+\Omega_{\alpha_{\sigma}(D)}]
\end{array}
\]
Let $D\in \mathcal{A}$ be such that $\star F_D = -2\pi\sqrt{-1}\mathrm{par}\mu(E)\,.$
Consider the connection $B = \frac{1}{2}(D + \alpha_\sigma(D))$. Then, clearly $\alpha_\sigma(B) = B$. 
From the above computation, it follows that $\star F_B = -2\pi\sqrt{-1}\mathrm{par}\mu(E)\,.$ 
This discussion shows that if there is a holomorphic structure on $E$ such that the resulting 
holomorphic parabolic bundle $E_\bullet$ is semistable, then on can get a holomorphic structure on 
$E$ which is compatible with the real (resp. quaternionic) structure such that 
$(E_\bullet, d_B)$ is semistable.

\subsection*{Equivariant point of view}
Here, we will briefly outline an equivariant approach to address the question of constructing 
suitable moduli space of real (resp. quaternionic) parabolic bundles (discussed above) using 
the equivariant description of real (resp. quaternionic)
parabolic bundles without specific routine details.

Suppose that the weights $0\leq \alpha_1^x < \dots < \alpha_{k_x}^x $ are rational numbers. Let $N$
be a positive integer such that all the weights are integral multiple of $1/N$. Let 
$p\colon (Y, \sigma_Y)\ra (X, \sigma_X)$ be an $N$-fold cyclic ramified covering which is 
ramified over each point of $S$ \cite{Am14a}. Let $\Gamma$ be a Galois group of the covering $p$. 
There is an equivalence between the category of real (resp. quaternionic) $\Gamma$-equivariant 
vector bundles over $(Y, \sigma_Y)$ and the category of real (resp. quaternionic) parabolic bundles
over $(X, \sigma_X; S)$ whose weights are integral multiple of $1/N$. 
Let $\mathfrak{B}(\tau)$ be the set of real $S$-equivalence classes of
real $\Gamma$-equivariant semistable bundles over $(Y, \sigma_Y)$ having local type $\tau$ 
(cf. \cite{Se70} for local type). Let $\mathfrak{B}(\tau_p)$ be the set of real (resp. quaternionic) 
$S$-equivalence classes of real parabolic bundles over $(X, \sigma_X; S)$ having parabolic type 
$\tau_p$, where the parabolic type $\tau_p$ is uniquely determine by the local type $\tau$. 
Using \cite[Proposition 5.4]{Am14a}, it is straightforward to check that, under the equivalence 
$\Psi$ of \cite[Theorem 5.3]{Am14a}, there is a bijection between $\mathfrak{B}(\tau)$ and $\mathfrak{B}(\tau_p)$.

Fix a real (resp. quaternionic) smooth $\Gamma$-equivariant bundle $(W, \sigma^W)$ on $(Y, \sigma_Y)$ 
having local type $\tau$.
Let $\mathscr{C}$ denote the space of holomorphic structure on $W$, and let $\mathscr{G}$ be the gauge group of
$W$. A holomorphic structure $\overline{\partial}_W$ in $W$ is called compatible with $\Gamma$-equivariant structure on $W$
if the map $\overline{\partial}_W \colon A^0(W)\ra A^{0, 1}(W)$ is $\Gamma$-equivariant. Let $\mathscr{C}_\Gamma$ be the
set of all holomorphic structures compatible with the $\Gamma$-equivariant structure on $W$. Let
$\mathscr{G}_\Gamma$ be the subgroup of $\mathscr{G}$ consisting of $\Gamma$-equivariant automorphisms of $W$.
There is induced involution on $\mathcal{D}$, which we denote by $\tilde{\alpha}_\sigma$. Similarly, 
we have the induced involution $\tilde{\gamma}_\sigma$ on $\mathcal{H}$.

Let $\mathscr{D}_\Gamma^{\tilde{\alpha_\sigma}} := \{\overline{\partial}_W \in  \mathscr{D}_\Gamma 
\;|\; \tilde{\alpha}_\sigma(\overline{\partial}_W) = \overline{\partial}_W\}$ and 
$\mathscr{G}_\Gamma^{\tilde{\gamma}_\sigma} := \{g\in \mathscr{G}_\Gamma \;|\; \tilde{\gamma}_\sigma(g) = g\}$. 
It can be easily checked that
the orbit space $\mathscr{D}_\Gamma^{\tilde{\alpha_\sigma}}/\mathscr{G}_\Gamma^{\tilde{\gamma}_\sigma}$ is in bijection
with the set of isomorphism classes of real (resp. quaternionic) $\Gamma$-equivariant holomorphic bundles whose underlying 
smooth real (resp. quaternionic) bundles are smoothly isomorphic to $(W, \sigma^W)$ as $\Gamma$-equivariant bundles.

Fix a $\Gamma$-invariant Hermitian metric $h_W$ on $W$. 
Let $\mathscr{A}$ be the set of all $h_W$-unitary connections on $W$, and the set $\mathscr{A}_\Gamma$ 
of all $h_W$-unitary $\Gamma$-equivariant connections on $W$. Let $\mathscr{U}_\Gamma$ denote the 
subgroup of unitary automorphisms of $(W, h_W)$ consisting of unitary $\Gamma$-automorphism of $(W, h_W)$.

Let $\mathscr{A}_{\Gamma, \mathrm{ss}}:= (\star F)^{-1}(2\pi\sqrt{-1}\mu(W)/N)$. Then, one can check that
$$\mathcal{N}^{\tau}_{\tilde{\sigma}}:=\mathscr{A}_{\Gamma, \mathrm{ss}}^{\tilde{\alpha}_\sigma}/\mathscr{U}_\Gamma^{\tilde{\gamma}_\sigma} 
\simeq \mathfrak{B}(\tau) \simeq \mathfrak{B}(\tau_p).
$$
The bijection $\mathcal{N}^{\tau}_{\tilde{\sigma}} \stackrel{\simeq}{\ra} \mathfrak{B}(\tau)$ can be 
proved by establishing the results of \cite{Sch12} in the equivariant set-up. The second bijection
$\mathfrak{B}(\tau) \stackrel{\simeq}{\ra} \mathfrak{B}(\tau_p)$ is a consequence of the preservation
of stability under the equivalence $\Psi$ of \cite[Theorem 5.3]{Am14a} as mentioned above.
In this approach, one can avoid the use of the theory of weighted
Sobolev spaces; while working with the rational weights.

\subsection{Real points of the moduli scheme}
Let $M_X^\mathrm{ss}(\tau_p)$ be the moduli scheme of stable holomorphic parabolic bundles of parabolic 
type $\tau_p$. Then, we get a map $\sigma_M \colon M_X^\mathrm{ss}(\tau_p) \ra M_X^\mathrm{ss}(\tau_p)$
given by $[E_\bullet]\mapsto [\sigma_X^*\overline{E}_\bullet]$ on the closed points.
\begin{proposition}
The map $\sigma_M \colon M_X^\mathrm{ss}(\tau_p) \ra M_X^\mathrm{ss}(\tau_p)$ is a semi-linear involution
of $\C$-schemes.
\end{proposition}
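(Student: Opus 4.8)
The plan is to realize $\sigma_M$ as a composite of two canonical maps and to read off all of its properties from that description; write $M := M_X^{\mathrm{ss}}(\tau_p)$ for brevity. First I would record that the assignment $E_\bullet \mapsto \sigma_X^*\overline{E}_\bullet$ is well defined on $S$-equivalence classes. The operation $E \mapsto \sigma_X^*\overline{E}$ is $\C$-antilinear on morphism spaces but preserves rank and parabolic degree, sends parabolic subbundles to parabolic subbundles of equal parabolic slope, and, since the weights over $x$ and $\sigma_X(x)$ coincide by Definition \ref{def-par-real-quat}, preserves the parabolic type $\tau_p$. In particular it preserves parabolic semistability and carries Jordan--H\"older filtrations to Jordan--H\"older filtrations, hence intertwines the formation of $\mathrm{gr}(-)$; thus $[E_\bullet] \mapsto [\sigma_X^*\overline{E}_\bullet]$ is a well-defined map on the closed points of $M$.

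To upgrade this set-theoretic map to a morphism of schemes I would adopt the conjugate-scheme (Galois descent) point of view. Let $\overline{M} := M \times_{\mathrm{Spec}\,\C,\, \sigma_\C} \mathrm{Spec}\,\C$ be the base change of $M$ along complex conjugation, and let $c_M \colon M \ra \overline{M}$ be the tautological semilinear isomorphism covering $\mathrm{Spec}(\sigma_\C)$. Because the Mehta--Seshadri GIT construction of the moduli scheme is natural under the base-field automorphism $\sigma_\C$, the scheme $\overline{M}$ is canonically identified with the moduli scheme $M_{\overline{X}}^{\mathrm{ss}}(\overline{\tau_p})$ of parabolic bundles on the conjugate curve $\overline{X}$, carrying the conjugated parabolic points $\overline{S}$ and the same (real) weight and flag data. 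The anti-holomorphic involution $\sigma_X$ is a holomorphic isomorphism $\sigma_X \colon X \ra \overline{X}$ of $\C$-schemes taking $S$ to $\overline{S}$ and matching the weights, so pullback of families along it induces a $\C$-linear isomorphism $\sigma_X^{\,*} \colon M_{\overline{X}}^{\mathrm{ss}}(\overline{\tau_p}) \xrightarrow{\ \sim\ } M$. I then set
$$
\sigma_M \ :=\ \sigma_X^{\,*} \circ c_M \colon M \ra M .
$$
Unwinding the identifications on closed points, $c_M$ sends $[E_\bullet]$ to $[\overline{E}_\bullet] \in M_{\overline{X}}^{\mathrm{ss}}(\overline{\tau_p})$ and $\sigma_X^{\,*}$ sends this to $[\sigma_X^*\overline{E}_\bullet]$, so $\sigma_M$ realizes the stated map.

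With $\sigma_M$ in hand, the two required properties are formal. Semilinearity is automatic, since $c_M$ covers $\mathrm{Spec}(\sigma_\C)$ and $\sigma_X^{\,*}$ is a morphism of $\C$-schemes, whence $\sigma_M$ covers $\mathrm{Spec}(\sigma_\C)$. For the involution property, I would observe that $\sigma_M \circ \sigma_M$ is a composite of two semilinear morphisms, hence an honest $\C$-morphism $M \ra M$; on closed points it is $[E_\bullet] \mapsto [\sigma_X^*\overline{\sigma_X^*\overline{E}}_\bullet]$, which equals $[E_\bullet]$ by the canonical isomorphism $\sigma_X^*\overline{\sigma_X^*\overline{E}} \cong E$ (this is precisely the compatibility $\sigma_X^*\overline{\phi}\circ \phi = \mathrm{Id}_E$ underlying real structures, together with $\sigma_X^2 = \mathrm{Id}_X$). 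As $M$ is a normal projective, hence reduced, scheme of finite type over the algebraically closed field $\C$, a $\C$-morphism is determined by its effect on closed points, so $\sigma_M \circ \sigma_M = \mathrm{Id}_M$.

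The main obstacle is exactly the passage from a map on closed points to a morphism of schemes: because $M$ is only a coarse (GIT) moduli space, there is in general no universal parabolic bundle on $X \times M$ to transport directly by $\sigma_X^*\overline{(-)}$. The argument above sidesteps this by identifying the conjugate scheme $\overline{M}$ with the moduli scheme of the conjugate curve through functoriality of the GIT construction under $\sigma_\C$; making this identification precise (compatibility of the chosen linearization and of the $S$-equivalence relation under base change along $\sigma_\C$) is the technical heart of the proof, after which semilinearity and the involution property follow by pure formalism.
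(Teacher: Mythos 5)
Your proposal is correct, but it takes a genuinely different route from the paper's proof. The paper argues directly from the universal property of the coarse moduli scheme applied to families: given any $\C$-scheme $T$ and a flat family $E_\bullet$ of semistable parabolic bundles of type $\tau_p$ parametrized by $T$, it forms the conjugated family $\sigma_T^*\overline{E}_\bullet$ with $\sigma_T := \sigma_X\times \mathrm{Id}_T$, checks that its fibres satisfy $\sigma_T^*\overline{E}_{t\bullet}\cong \sigma_X^*\overline{E_t}_\bullet$, hence that it is again a flat family of type $\tau_p$, and then invokes corepresentability to produce $\sigma_M$, with semilinearity and involutivity inherited from $\sigma_T$. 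You instead factor $\sigma_M=\sigma_X^{\,*}\circ c_M$ through the conjugate scheme $\overline{M}\cong M_{\overline{X}}^{\mathrm{ss}}(\overline{\tau_p})$. The two arguments are close cousins---your $\C$-linear map $\sigma_X^{\,*}$ between moduli schemes is itself obtained by the same universal-property-on-families reasoning the paper uses---but your decomposition buys something real: it makes explicit a point the paper elides, namely that conjugating a family parametrized by $T$ naturally produces a family parametrized by $\overline{T}$, not by $T$ (the paper's $\sigma_T=\sigma_X\times\mathrm{Id}_T$ is not literally a morphism over $\mathrm{Spec}(\C)$, since $\sigma_X$ covers $\sigma_\C$ while $\mathrm{Id}_T$ covers the identity); this is exactly where the semilinearity of $\sigma_M$ resides, and your $c_M$ keeps honest track of it. Likewise your involutivity argument---$\sigma_M\circ\sigma_M$ is a $\C$-morphism agreeing with the identity on closed points, hence equal to $\mathrm{Id}_M$ because $M$ is reduced, separated and of finite type over $\C$---is a clean substitute for the paper's terse appeal to ``$\sigma_T$ being a semi-linear involution''. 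The price is that you must justify that the Mehta--Seshadri GIT construction commutes with base change along $\mathrm{Spec}(\sigma_\C)$, which you rightly flag as the technical heart of your version; the paper's route never needs to name this fact, at the cost of leaving the semilinear bookkeeping implicit.
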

\begin{proof}
Let $T$ be a $\C$-scheme and $E_\bullet$ be a flat family of semistable parabolic bundles of type $\tau_p$
parametrized by $T$. Consider the morphism $\sigma_T:= \sigma_X\times \mathrm{Id}_T \colon X\times_\C T \ra X\times_\C T$. 
Then, $\sigma_T^*\overline{E}_\bullet$ is flat over $T$ and for any $t\in T$, we have 
$\sigma_T^*\overline{E}_{t_\bullet} \cong \sigma_X^*\overline{E_t}_\bullet$ as parabolic bundles over $(X, S)$. Therefore, 
$\sigma_T^*\overline{E}_\bullet$ is a flat family of semistable parabolic bundles of type $\tau_p$. By universal property of
moduli scheme $M_X^\mathrm{ss}(\tau_p)$, the map $T\ra M_X^\mathrm{ss}(\tau_p)$ given by 
$t\mapsto [\sigma_T^*\overline{E}_{t_\bullet}]$ is a morphism. Since $T$ and $E_\bullet$ are arbitrary, and $\sigma_T$
being semi-linear involution, it follows that
the map $\sigma_M \colon M_X^\mathrm{ss}(\tau_p) \ra M_X^\mathrm{ss}(\tau_p)$ is a morphism of schemes such that the 
following diagram 
\[
\xymatrix{
M_X^\mathrm{ss}(\tau_p) \ar[r]^{\sigma_M} \ar[d] & M_X^\mathrm{ss}(\tau_p) \ar[d] \\
\mathrm{Spec}(\C) \ar[r]_{\sigma_\C} & \mathrm{Spec}(\C)
}
\]
commutes. 
\end{proof}

The elements of fixed point set $M_X^\mathrm{ss}(\tau_p)(\R)$ of the involution $\sigma_M$ on $M_X^\mathrm{ss}(\tau_p)(\C)$
may have both (real and quaternionic) structures or may be of neither type (see \cite[\S 2.5]{Sch12} for the discussion in the 
usual case). The situation is better in the case of a (geometrically) stable locus. 

\begin{lemma}\label{lemma-gs-rqs}
Let $E_\bullet$ be a stable holomorphic parabolic bundle on $X$ with $\sigma_X^*\overline{E}_\bullet 
\cong E_\bullet$. Then, $E_\bullet$ is either real or quaternionic, and it can not be both.
\end{lemma}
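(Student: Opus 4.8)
The plan is to exploit the stability hypothesis to pin down the automorphisms of $E_\bullet$, and then to analyze how a chosen isomorphism $\phi\colon \sigma_X^*\overline{E}_\bullet \ra E_\bullet$ interacts with the constraint $\sigma_X^*\overline{\phi}\circ\phi = \pm\,\mathrm{Id}_E$. Since $E_\bullet$ is stable, its parabolic endomorphism algebra is just the scalars, $\mathrm{ParEnd}(E_\bullet) = \C\cdot\mathrm{Id}_E$, exactly as recalled in the proof of Corollary \ref{cor-rs-simple}. First I would fix \emph{any} isomorphism $\phi\colon \sigma_X^*\overline{E}_\bullet \ra E_\bullet$ of holomorphic parabolic bundles, which exists by the hypothesis $\sigma_X^*\overline{E}_\bullet \cong E_\bullet$; such a $\phi$ is unique up to a nonzero scalar in $\C^\times$ precisely because $E_\bullet$ is stable.

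The key computation is to understand the composite $\psi := \sigma_X^*\overline{\phi}\circ\phi$, which is an automorphism of $E_\bullet$ as a holomorphic parabolic bundle, hence a scalar $\psi = \lambda\,\mathrm{Id}_E$ for some $\lambda\in\C^\times$ by stability. Next I would apply the operation $\sigma_X^*\overline{(-)}$ again and use the involutive nature of $\sigma_X$ (so that $\sigma_X^*\overline{\sigma_X^*\overline{\phi}} = \phi$ under the canonical identification) to show that $\lambda$ is forced to be \emph{real}: concretely, computing $\sigma_X^*\overline{\psi}$ in two ways yields $\overline{\lambda} = \lambda$, so $\lambda\in\R^\times$. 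Having established $\lambda\in\R^\times$, I would rescale $\phi$ by a complex constant $c$, replacing $\phi$ by $c\phi$, which rescales $\psi$ to $|c|^2\lambda$; thus the \emph{sign} of $\lambda$ is an invariant of the isomorphism class that cannot be changed by rescaling, whereas by choosing $|c|^2 = 1/|\lambda|$ I can normalize so that $\psi = \pm\,\mathrm{Id}_E$. When the sign is $+$, the normalized $\phi$ gives a real structure; when the sign is $-$, it gives a quaternionic structure.

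Finally, to see that $E_\bullet$ \emph{cannot} be both real and quaternionic, I would argue that the sign of $\lambda$ is well-defined independently of the choice of $\phi$. Since any two isomorphisms $\sigma_X^*\overline{E}_\bullet \ra E_\bullet$ differ by a scalar $c\in\C^\times$, and rescaling multiplies $\psi$ by the positive number $|c|^2$, the sign of the real number $\lambda$ is intrinsic to $E_\bullet$. Hence exactly one of the two cases occurs, and $E_\bullet$ is real or quaternionic but not both. The main obstacle I anticipate is the bookkeeping in the second step: correctly tracking the canonical identification $\sigma_X^*\overline{\sigma_X^*\overline{E}} \cong E$ arising from $\sigma_X^2 = \mathrm{Id}_X$ and verifying that it is compatible with the parabolic flags and weights (which are preserved because $\sigma_X(S) = S$ and the weights over $x$ and $\sigma_X(x)$ agree, by Definition \ref{def-par-real-quat}), so that the reality of $\lambda$ comes out cleanly rather than with an unwanted twist. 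Once that identification is handled carefully, the sign argument is formal.
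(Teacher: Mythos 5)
Your proof is correct and takes essentially the same route as the paper: stability forces parabolic automorphisms to be scalars, so the composite $\sigma_X^*\overline{\phi}\circ\phi$ is $\lambda\,\mathrm{Id}_E$ with $\lambda$ real, and the sign of $\lambda$, being invariant under rescaling $\phi$, decides between real and quaternionic. The paper phrases this via the anti-holomorphic lift $\tilde{\sigma}$ with $\tilde{\sigma}^2 = c\,\mathrm{Id}_E$ and defers the reality-of-$c$, normalization, and sign-invariance steps to \cite{BHH10, Sch12}; your write-up simply makes those cited details explicit.
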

\begin{proof}
Note that the isomorphism $\varphi \colon E_\bullet \ra \sigma_X^*\overline{E}_\bullet$ is the same as the
anti-holomorphic map $\tilde{\sigma}\colon E\ra E$ which respects the parabolic structure over $S$.
Hence, the composition $\tilde{\sigma}^2$ is a parabolic automorphism of $E_\bullet$. Since $E_\bullet$ is 
stable, we have $\tilde{\sigma}^2 = c\mathrm{Id}_{E}$. The remaining proof follows in the same line as in
\cite{BHH10, Sch12}.
\end{proof}

\begin{lemma}\label{lemma-hiso-riso}
Let $E_\bullet$ be a stable holomorphic parabolic bundle on $X$. If $\tilde{\sigma}$ and $\tilde{\sigma}'$
are two real (resp. quaternionic) structure on $E$ such that $(E_\bullet, \tilde{\sigma})$ and
$(E_\bullet, \tilde{\sigma}')$ are real (resp. quaternionic) parabolic bundles, then $(E_\bullet, \tilde{\sigma}) \cong (E_\bullet, \tilde{\sigma}')$.
\end{lemma}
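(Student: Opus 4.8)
The plan is to reduce everything to a statement about scalars, exactly as Schur's lemma for stable parabolic bundles allows. Recall from the definition of morphisms of real (resp. quaternionic) bundles that an isomorphism $(E_\bullet,\tilde{\sigma})\to(E_\bullet,\tilde{\sigma}')$ is a parabolic automorphism $f$ of $E_\bullet$ satisfying $f\circ\tilde{\sigma}=\tilde{\sigma}'\circ f$. So the entire task is to produce such an intertwining $f$. First I would form the composite $g:=\tilde{\sigma}'\circ\tilde{\sigma}$. Since $\tilde{\sigma}$ and $\tilde{\sigma}'$ are both anti-holomorphic and cover $\sigma_X$, the map $g$ is $\C$-linear and holomorphic and covers $\sigma_X\circ\sigma_X=\mathrm{Id}_X$; moreover, because each of $\tilde{\sigma},\tilde{\sigma}'$ respects the flags over $S$ (Definition \ref{def-par-real-quat}) and each preserves the weights, the composite $g$ respects the parabolic structure. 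Hence $g$ is a parabolic automorphism of $E_\bullet$.

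Next, since $E_\bullet$ is stable, $\mathrm{ParEnd}(E_\bullet)=\C\cdot\mathrm{Id}_E$ (as recalled in the proof of Corollary \ref{cor-rs-simple}), so $g=\lambda\,\mathrm{Id}_E$ for some $\lambda\in\C^{\times}$. Writing $\tilde{\sigma}^{2}=\varepsilon\,\mathrm{Id}_E$ with $\varepsilon=1$ in the real case and $\varepsilon=-1$ in the quaternionic case, one has $\tilde{\sigma}^{-1}=\varepsilon\,\tilde{\sigma}$, and therefore $\tilde{\sigma}'=g\circ\tilde{\sigma}^{-1}=\varepsilon\lambda\,\tilde{\sigma}$. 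Using the anti-linearity identity $\tilde{\sigma}\circ(\lambda\,\mathrm{Id}_E)=\bar{\lambda}\,\tilde{\sigma}$, I would then compute $(\tilde{\sigma}')^{2}=|\lambda|^{2}\,\tilde{\sigma}^{2}=|\lambda|^{2}\varepsilon\,\mathrm{Id}_E$; imposing $(\tilde{\sigma}')^{2}=\varepsilon\,\mathrm{Id}_E$ forces $|\lambda|=1$. This normalization is the one nontrivial input, and it is what makes the final step solvable.

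Finally I would search for the intertwiner among the homotheties $f=\mu\,\mathrm{Id}_E$, which are automatically parabolic automorphisms. A short computation with anti-linearity turns $f\circ\tilde{\sigma}=\tilde{\sigma}'\circ f$ into the single scalar equation
\[
\mu \;=\; \varepsilon\,\bar{\mu}\,\lambda, \qquad\text{equivalently}\qquad \frac{\mu}{\bar{\mu}}=\varepsilon\lambda .
\]
Because $|\varepsilon\lambda|=|\lambda|=1$, I may choose $\mu$ on the unit circle with $\mu^{2}=\varepsilon\lambda$, whence $\mu/\bar{\mu}=\mu^{2}=\varepsilon\lambda$ and $f=\mu\,\mathrm{Id}_E$ is the desired isomorphism of real (resp. quaternionic) parabolic bundles. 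The only genuine subtleties are bookkeeping ones: verifying that $g$ really respects the parabolic data so that the Schur-type identification $\mathrm{ParEnd}(E_\bullet)=\C$ applies, and tracking the conjugations carefully enough to land on the correct sign $\varepsilon$ and the constraint $|\lambda|=1$. Once those are in place, the construction of $f$ is immediate.
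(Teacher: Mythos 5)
Your proof is correct and follows essentially the same route as the paper's: compose the two structures to get a $\C$-linear parabolic automorphism, invoke simplicity of the stable parabolic bundle to identify it with a scalar $\lambda$, show $|\lambda|=1$, and conjugate by a unit-modulus square root to produce the intertwining homothety. The paper simply outsources the last two steps to Schaffhauser's Proposition 2.8, whereas you carry them out explicitly (including the verification that the composite respects the parabolic data), so the content matches.
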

\begin{proof}
Note that $\tilde{\sigma}\circ \tilde{\sigma}'$ is a parabolic automorphism of $E_\bullet$. Since 
$E_\bullet$ is stable, we have $\tilde{\sigma}\circ \tilde{\sigma}' = \lambda \in \C^*$. As in the proof
of \cite[Proposition 2.8]{Sch12}, we get $\tilde{\sigma} = e^{i\frac{\theta}{2}}\tilde{\sigma}'e^{-i\frac{\theta}{2}}$, where
$\lambda = e^{i\theta}$ for some $\theta \in \R$. This proves that 
$(E_\bullet, \tilde{\sigma}) \cong (E_\bullet, \tilde{\sigma}')$.
\end{proof}

By Proposition \ref{prop-2}, we can see that the map 
$$
\mathcal{N}^{\tau_p}_{\tilde{\sigma}} \ra M_X^\mathrm{ss}(\tau_p)(\C)\;; \quad 
O_{\mathcal{G}^{p, \sigma}}(D) \mapsto O_{\mathcal{G}^p}(D)
$$
is injective. For $D \in \mathcal{A}^{p,\alpha_\sigma}$, we have $\sigma_M(O_{\mathcal{G}^p}(D)) = O_{\mathcal{G}^p}(D)$. 
Hence, it follows that the quotient space $\mathcal{N}^{\tau_p}_{\tilde{\sigma}}$ embeds into the space 
$M_X^\mathrm{ss}(\tau_p)(\R)$ of real points of the moduli scheme $M_X^\mathrm{ss}(\tau_p)$. Let 
$\mathcal{N}^{\tau_p}_{\tilde{\sigma}, s} = \mathcal{N}^{\tau_p}_{\tilde{\sigma}} \cap M_X^\mathrm{s}(\tau_p)(\R)$

For a smooth parabolic bundle $E_\bullet$ with parabolic type $\tau_p$, let $\mathfrak{I}$ denote the 
parabolic gauge conjugacy classes of real or quaternionic structures on $E$. 

\begin{proposition}
$M_X^\mathrm{s}(\tau_p)(\R) = 
\displaystyle \bigsqcup_{[\tilde{\sigma}]\in \mathfrak{I}} \mathcal{N}^{\tau_p}_{\tilde{\sigma}, s}$
\end{proposition}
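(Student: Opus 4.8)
The plan is to identify the image of each summand $\mathcal{N}^{\tau_p}_{\tilde{\sigma}, s}$ inside $M_X^\mathrm{s}(\tau_p)(\R)$ explicitly, and then to verify coverage and pairwise disjointness using Lemmas \ref{lemma-gs-rqs} and \ref{lemma-hiso-riso}. First I would record that, by Proposition \ref{prop-2} applied on the stable locus (where $S$-equivalence coincides with isomorphism), the natural map $\mathcal{N}^{\tau_p}_{\tilde{\sigma}} \ra M_X^\mathrm{ss}(\tau_p)(\C)$ is injective and carries $\mathcal{N}^{\tau_p}_{\tilde{\sigma}, s}$ into $M_X^\mathrm{s}(\tau_p)(\R)$, so each summand is genuinely a subset of the target. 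The crucial bookkeeping step is to describe that subset. Since all holomorphic parabolic bundles of type $\tau_p$ share the single smooth underlying bundle $E$, a smooth isomorphism of real (resp. quaternionic) parabolic bundles $(F, \tau) \cong (E, \tilde{\sigma})$ is exactly a smooth parabolic gauge transformation $g \in \mathcal{G}_{\mathrm{par}}$ with $\tilde{\sigma} = g\,\tau\, g^{-1}$. Combined with Theorem \ref{main-thm}, this shows that the image of $\mathcal{N}^{\tau_p}_{\tilde{\sigma}, s}$ is precisely the set of isomorphism classes $[F_\bullet]$ of stable holomorphic parabolic bundles admitting a real (resp. quaternionic) structure that lies in the parabolic gauge-conjugacy class $[\tilde{\sigma}] \in \mathfrak{I}$.

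For the inclusion establishing coverage, I would take an arbitrary real point $[F_\bullet] \in M_X^\mathrm{s}(\tau_p)(\R)$, so that $F_\bullet$ is stable and $\sigma_X^*\overline{F}_\bullet \cong F_\bullet$. Lemma \ref{lemma-gs-rqs} then furnishes a real or quaternionic structure $\tau$ on $F$, whose conjugacy class $[\tau]$ is an element of $\mathfrak{I}$; by the description above, $[F_\bullet]$ lies in the image of $\mathcal{N}^{\tau_p}_{\tau, s}$. Hence every real point is covered by at least one summand.

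For disjointness, suppose a class $[F_\bullet]$ lies in the images of both $\mathcal{N}^{\tau_p}_{\tilde{\sigma}, s}$ and $\mathcal{N}^{\tau_p}_{\tilde{\sigma}', s}$. Then $F_\bullet$ carries structures $\tau$ conjugate to $\tilde{\sigma}$ and $\tau'$ conjugate to $\tilde{\sigma}'$. Lemma \ref{lemma-gs-rqs} forces $\tau$ and $\tau'$ to be of the same type, since the type is determined by the stable bundle $F_\bullet$; Lemma \ref{lemma-hiso-riso} then gives an isomorphism $(F_\bullet, \tau) \cong (F_\bullet, \tau')$ of real (resp. quaternionic) parabolic bundles, that is, a holomorphic parabolic automorphism $f$ of $F_\bullet$ with $\tau' = f\,\tau\, f^{-1}$. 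As $f \in \mathcal{G}_{\mathrm{par}}$, the structures $\tau$ and $\tau'$, and hence $\tilde{\sigma}$ and $\tilde{\sigma}'$, are parabolic gauge conjugate, so $[\tilde{\sigma}] = [\tilde{\sigma}']$ in $\mathfrak{I}$. Thus distinct indices yield disjoint subsets, and together with coverage this establishes the decomposition $M_X^\mathrm{s}(\tau_p)(\R) = \bigsqcup_{[\tilde{\sigma}] \in \mathfrak{I}} \mathcal{N}^{\tau_p}_{\tilde{\sigma}, s}$.

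The main obstacle I expect lies not in the coverage or disjointness arguments themselves, which are immediate once the two lemmas are in hand, but in pinning down the image of each $\mathcal{N}^{\tau_p}_{\tilde{\sigma},s}$ correctly: the careful translation between smooth isomorphism classes of real (resp. quaternionic) parabolic bundles and parabolic gauge-conjugacy classes of real structures on the fixed smooth bundle $E$. One must verify that a holomorphic parabolic isomorphism is a legitimate element of $\mathcal{G}_{\mathrm{par}}$, which holds since it is a flag-preserving smooth automorphism, and conversely that gauge-equivalent real connections in $\mathcal{A}^{p,\alpha_\sigma}_\mathrm{ss}$ descend to the same point of $M_X^\mathrm{s}(\tau_p)(\R)$, which is exactly the injectivity supplied by Proposition \ref{prop-2}.
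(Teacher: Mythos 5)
Your proposal is correct and takes essentially the same route as the paper's own proof: coverage of $M_X^\mathrm{s}(\tau_p)(\R)$ via Theorem \ref{main-thm} together with Lemma \ref{lemma-gs-rqs}, disjointness via Lemma \ref{lemma-hiso-riso}, and the embedding of each $\mathcal{N}^{\tau_p}_{\tilde{\sigma},s}$ into the real points via the injectivity from Proposition \ref{prop-2}. The paper's argument is simply a terser version of yours; your extra bookkeeping identifying the image of each summand as the set of stable classes admitting a real (resp. quaternionic) structure in the given gauge-conjugacy class just makes explicit what the paper leaves implicit.
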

\begin{proof}
By Theorem \ref{main-thm} and Lemma \ref{lemma-gs-rqs}, we can conclude that $M_X^\mathrm{s}(\tau_p)(\R) = 
\displaystyle \bigcup_{[\tilde{\sigma}]\in \mathfrak{I}} \mathcal{N}^{\tau_p}_{\tilde{\sigma}, s}\,.$ 
If
$[E_\bullet] \in \mathcal{N}^{\tau_p}_{\tilde{\sigma}, s} \cap \mathcal{N}^{\tau_p}_{\tilde{\sigma}', s}$, then
by Lemma \ref{lemma-hiso-riso}, we have $(E_\bullet, \tilde{\sigma}) \cong (E_\bullet, \tilde{\sigma}')$.
Hence, a parabolic gauge transformation conjugates the real structures $\tilde{\sigma}$ and $\tilde{\sigma}'$.
\end{proof}

\begin{remark}\rm{
There is an isomorphism of schemes
$\psi \colon \mathrm{M}_Y^\mathrm{ss}(\tau) \ra M_X^\mathrm{ss}(\tau_p)$ given by 
$[W]\mapsto [(p_*W^\Gamma)_\bullet]$ such that the following diagram
\[
\xymatrix{
\mathrm{M}_Y^\mathrm{ss}(\tau) \ar[r]^\psi \ar[d]_{\varrho_M} & M_X^\mathrm{ss}(\tau_p) \ar[d]^{\sigma_M} \\
\mathrm{M}_Y^\mathrm{ss}(\tau) \ar[r]^\psi &  M_X^\mathrm{ss}(\tau_p)
}
\]
commutes, where $\mathrm{M}_Y^\mathrm{ss}(\tau)$ is the moduli space of $\Gamma$-equivariant semistable vector bundles
on $Y$ having local type $\tau$, and $\varrho_M$ the induced semi-linear involution on $\mathrm{M}_Y^\mathrm{ss}(\tau)$.
Moreover, we have
$$M_X^\mathrm{s}(\tau)(\R) = 
\displaystyle \bigsqcup_{[\tilde{\sigma}]\in \mathfrak{J}} \mathcal{N}^{\tau}_{\tilde{\sigma}, s}$$ where 
$\mathfrak{J}$ denote the gauge conjugacy classes of real or quaternionic structures on $W$, which are 
compatible with the $\Gamma$-equivariant structure on $W$. 
}
\end{remark}

\subsection{Quillen line bundle}
Recall that there is a determinant line bundle $\mathcal{L}$ on $\mathcal{C}$ (cf. \cite{Q85}) such that
the action of $\C^*$ on $\mathcal{L}$ is given by $\lambda\cdot s\mapsto \lambda^{-\chi(E)}s$, where 
$\lambda\in \C^*$ and $\chi(E) = d + r(1-g)$ (see, \cite[p. 49]{BR93}). Fix a point $x\in X\setminus S$.
Consider the line bundle 
$$
\tilde{\mathcal{L}} := \mathcal{L}^r\otimes(\mathrm{det}(\mathcal{C}\times E_x))^{\chi(E)}
$$ 
Then, the action of $\C^*$ on $\tilde{\mathcal{L}}$ is trivial. Note that the quotient map
$\varphi \colon \mathcal{C}_\mathrm{s} \ra \mathrm{M}^s_X(\tau_p)$ is a 
$\mathcal{PG}_\mathrm{par}$-principal bundle,
where $\mathcal{PG}_\mathrm{par} = \mathcal{G}_\mathrm{par}/\C^*$ and 
$$\mathcal{C}_{\mathrm{s}}:= \{\bar{\partial}_E \in \mathcal{C}\;|\; (E_\bullet, \bar{\partial}_E) ~ 
\mbox{is stable parabolic bundle}\}.$$ Hence, the restriction of $\tilde{\mathcal{L}}$
on $\mathcal{C}_s$ descends to a line bundle $L_\mathrm{par}$ on $\mathrm{M}^s_X(\tau_p)$. 
Recall that the Lagrangian quotient 
$\psi \colon \mathcal{C}_\mathrm{s}^{\alpha_\sigma} \ra \mathcal{N}_{\tilde{\sigma}}^{\tau_p}$ is a 
$\mathcal{PG}_\mathrm{par}^{\gamma_\sigma}$-principal bundle. Then, the restriction of the
line bundle $\tilde{\mathcal{L}}$ to $\mathcal{C}_\mathrm{s}^{\alpha_\sigma}$ descends to a line bundle
$L_{\tilde{\sigma}}^{\tau_p}$ on $\mathcal{N}_{\tilde{\sigma}}^{\tau_p}$.
Consider the following 
\[
\xymatrix{
\mathcal{C}_\mathrm{s}^{\alpha_\sigma} \ar[r]^\iota \ar[d]_\psi & \mathcal{C}_\mathrm{s} \ar[d]^\varphi \\
\mathcal{N}_{\tilde{\sigma}}^{\tau_p} \ar[r]_j & **[r]\mathrm{M}^s_X(\tau_p)
}
\]
commutative diagram of principal bundles. Then, we have $j^*(L_\mathrm{par}) \cong L_{\tilde{\sigma}}^{\tau_p}$.



\begin{thebibliography}{012345}
 \bibitem{AB82} M. F. Atiyah, R. Bott, \emph{The Yang-Mills equations over Riemann surfaces},
		 Philos. Trans. Roy. Soc. London Ser. A, \textbf{308}, 1982, p. 523-615.
 \bibitem{Am14a} S.~Amrutiya, \emph{Connections on real parabolic bundles over a real curve},
 		 Bull. Korean Math. Soc., \textbf{51} (2014) 1101-1113.
 \bibitem{Am14b} S.~Amrutiya, \emph{Real parabolic vector bundles over a real curve},
 		 Proc. Indian Acad. Sci. Math. Sci., \textbf{124} (2014), 17-30.
 \bibitem{Bi91} O.~Biquard, \emph{Fibr\`es parabolique stables et connexions singuli\'eres plates}, 
		Bull. Soc. Math. France, \textbf{119} (1991), 231-257.
 \bibitem{BR93} I.~Biswas,~N.~Raghavendra, \emph{Determinants of parabolic bundles on Riemann surfaces}, 
 		 Proc. Indian Acad. Sci. Math. Sci. \textbf{103} (1) (1993) 41-71.
 \bibitem{BS20} I.~Biswas,~F.~Schaffhauser, \emph{Parabolic vector bundles on Klein surfaces},
		  Illinois J. Math. \textbf{64} (2020), no. 1, 105-118.
 \bibitem{BHH10} I.~Biswas,~J.~Huisman,~and~J.~Hurtubise, \emph{The
		 moduli space of stable vector bundles over a real algebraic curve},
		 Math. Ann., \textbf{347} (2010), 201-233.
 \bibitem{DW97}G.~D.~Daskalopoulos,~R.~A.~Wentworth, \emph{Geometric quantization for the moduli 
 		 space of vector bundles with parabolic structure},  Geometry, topology and physics (Campinas, 1996), 
 		 119–155, de Gruyter, Berlin, 1997
 \bibitem{Do83} S.~K.~Donaldson, \emph{A new proof of a theorem of Narasimhan and Seshadri},
 		 J. Differential Geom., \textbf{18} (1983), 269-277.
 \bibitem{Ko93} H.~Konno, \emph{Construction of the moduli space of stable parabolic Higgs bundles on a
		 Riemann surface}, J. Math. Soc. Japan, \textbf{45}(2) (1993) 253-276.
 \bibitem{LM85} R.~B.~Lockart,~R.~C.~McOwen, \emph{Elliptic differential operators on 
		 noncompact manifoids}, Ann. Scuola. Norm. Sup. Pisa Cl. Sci.(4), \textbf{12}, 1985, p. 409-447.
 \bibitem{MS80} V.~B.~Mehta,~C.~S.~Seshadri, \emph{Moduli of vector bundles
         on curves with parabolic structure}, Math. Ann.  \textbf{248} (1980), 205-239.
 \bibitem{NS65} M.~S.~Narasimhan,~C.~S.~Seshadri, \emph{Stable and unitary vector bundles on 
 		 a compact Riemann surface}, Ann. of Math. (2) \textbf{82} (1965), 540-567.
 \bibitem{NS95} E.~B.~Nasatyr,~B.~Steer, \emph{The Narasimhan–Seshadri theorem for parabolic bundles: 
 		 An orbifold approach}, Philos. Trans. Roy. Soc. A 353 (1995), no. 1702, 137-171.
 \bibitem{Po93} J.~A.~Poritz, \emph{Parabolic vector bundles and Hermitian–Yang–Mills 
 		 connections over a Riemann surface}, Internat. J. Math. \textbf{4} (1993), no. 3, 467–501.
 \bibitem{Q85} D.~Quillen, \emph{Determinants of Cauchy–Riemann operators over a Riemann surface}, 
 		 Funct. Anal. Appl. \textbf{19} (1985) 31-34. 
 \bibitem{Se70} C.~S.~Seshadri, \emph{Moduli of $\pi$--vector bundles over
         an algebraic curve},  1970  Questions on Algebraic Varieties
         (C.I.M.E., III Ciclo, Varenna, 1969)  pp. 139-260 Edizioni Cremonese, Rome
 \bibitem{Se77} C.~S.~Seshadri, \emph{Moduli of vector bundles on curves with
         parabolic structures}, Bull. Amer. Math. Soc. \textbf{83} (1977), 124--126
 \bibitem{Se82} C.~S.~Seshadri, \emph{Fibr\'es vectoriels sur les courbes
         alg\'ebriques}, Ast\'erisque \textbf{96}, Soc. Math. France, Paris, 1982
 \bibitem{Sch11} F.~Schaffhauser, \emph{Moduli spaces of vector bundles over a Klein surface},
 		 Geom. Dedicata \textbf{151} (2011) 187-206.
 \bibitem{Sch12} F.~Schaffhauser, \emph{Real points of coarse moduli schemes of vector bundles 
 		 on a real algebraic curve}, J. Symplectic Geom. \textbf{10} (2012), 503-534.
 \bibitem{Sch17} F.~Schaffhauser, \emph{On the Narasimhan–Seshadri correspondence for real
 		 and quaternionic vector bundles}, J. Differential Geom. \textbf{105} (2017), no. 1, 119-162.
 \bibitem{Si90} SIMPSON (C.T.). - \emph{Harmonic bundles on noncompact curves}, 
		J. Amer. Math. Soc., \textbf{3}, 1990, p. 713-770.

\bibitem{U82} K. K. Uhlenbeck, \emph{Connections with $L^p$ bounds on curvature}, 
		Comm. Math. Phys., \textbf{83}, 1982, p. 31-42.

\bibitem{UY86} K. K. Uhlenbeck, S. T. Yau, \emph{On the existence of Hermitian Yang-Mills
		connections in stable vector bundles}, Comm. Pure Appl. Math., \textbf{39-S}, 1986, p. 257-293.

\end{thebibliography}
\end{document}